\definecolor{pucp}{rgb}{0, 0.125, 0.376}
\definecolor{codegreen}{rgb}{0,0.6,0}
\definecolor{codegray}{rgb}{0.5,0.5,0.5}
\definecolor{codepurple}{rgb}{0.58,0,0.82}
\definecolor{backcolour}{rgb}{0.95,0.95,0.92}
\theoremstyle{plain}
\newtheorem{theorem}{Theorem}[section]
\newtheorem{lemma}[theorem]{Lemma}
\newtheorem{corollary}[theorem]{Corollary}
\theoremstyle{definition}
\newtheorem{example}[theorem]{Example}
\newtheorem{assumption}{Assumption}
\newcommand{\floor}[1]{\left\lfloor #1 \right\rfloor}
\newcommand{\ceiling}[1]{\left\lceil #1 \right\rceil}
\newcommand{\mailto}[1]{\href{mailto:#1}{\texttt{#1}}}
\newcommand{\norm}[1]{\left\lVert#1\right\rVert}
\begin{document}
\title{Congestion and Penalization in Optimal Transport}
\author{
Marcelo Gallardo
\thanks{Department of Mathematics, Pontificia Universidad Católica del Perú (PUCP). Gallardo acknowledges insightful discussions with Professor Federico Echenique (UC Berkeley),
and former Minister of Health of Peru, Aníbal Velásquez. Dr. Velásquez provided key information with respect to the Peruvian health system.}\\
\mailto{marcelo.gallardo@pucp.edu.pe}
\and
Manuel Loaiza
\thanks{Autodesk, Inc.}\\
\mailto{manuel.loaiza@autodesk.com}
\and
Jorge Chávez
\thanks{Ch\'avez acknowledges support from the Pontificia Universidad Cat\'olica del Per\'u.}\\
\mailto{jrchavez@pucp.edu.pe}
}

\date{\today}
\maketitle

\begin{abstract}
We introduce a novel model based on the discrete optimal transport problem that incorporates congestion costs and replaces traditional constraints with weighted penalization terms. This approach better captures real-world scenarios characterized by demand-supply imbalances and heterogeneous congestion costs. We develop an analytical method for computing interior solutions, which proves particularly useful under specific conditions. Additionally, we propose an \(O((N+L)N^2 L^2)\) algorithm to compute the optimal interior solution.
For certain cases, we derive a closed-form solution and conduct a comparative statics analysis.
Finally, we present examples demonstrating how our model yields solutions distinct from classical approaches,
leading to more accurate outcomes in specific contexts,
such as Peru's health and education sectors.
\\
\\
\textbf{Keywords:}  Optimal transport, Congestion costs, Quadratic regularization, Matching, Penalization, Neumann series, Health economics.
\\
\textbf{JEL classifications:}  C61, C62, C78, D04, R41.
\end{abstract}

\newpage

\section{Introduction}

\indent Optimal Transport (OT) \citep{10.1007/978-3-540-71050-9, 10.2307/j.ctt1q1xs9h}
is a mathematical technique that, in recent years,
has been integrated into economic theory,
particularly in the study of matching markets
\citep{chiappori2010hedonic, hal-03936221, 10.1093/imaiai/iaz015, carlier2020sista, arXiv:2402.13378}.
Unlike classical matching models \citep{10.1080/00029890.1962.11989827, 10.1086/260756, 10.2307/1913392, 10.1017/CCOL052139015X, 10.1257/000282803321947001, 10.1257/0002828054825469, 10.1257/aer.20130929}, OT optimizes over distributions, providing a more flexible and general framework. Starting from the classical model, in which matching costs are represented by a linear function, various extensions have incorporated a regularization term in the objective function to obtain solutions with desirable properties such as sparsity. Notable examples include entropic regularization \citep{10.1086/677191, 10.1093/imaiai/iaz015, arXiv:2003.00855, hal-03936221} and quadratic regularization \citep{lorenz_manns_meyer_2019, gonzalez2024sparsity, wiesel2024sparsity, nutz_2024}. Both classical OT and its regularized variants have been widely applied in analyzing matching markets, including marriage markets \citep{10.1086/677191}, migration dynamics \citep{carlier2020sista}, labor markets \citep{10.3982/QE928}, and school choice \citep{arXiv:2402.13378}.

This paper introduces a new model built upon the quadratic regularization framework, similar to \cite{nutz_2024}, but adopting the approach of \cite{10.1007/s10589-023-00476-1} while introducing heterogeneity in the quadratic term. Our model captures elements absent in classical formulations and better aligns with real-world scenarios. Specifically, by replacing equality constraints with weighted penalization terms, the solution accommodates supply and demand imbalances, a feature particularly relevant in developing countries when modeling matching in education and healthcare markets.

Countries with developing economies often experience significant inefficiencies in education and healthcare due to excess demand, insufficient supply, mismatching, and systemic congestion. These structural issues have contributed to high mortality rates and service deficiencies, as demonstrated during the COVID-19 pandemic. For instance, \cite{JHU2023} indicates that Peru recorded the highest per capita COVID-19 mortality rate globally, exceeding 6,400 deaths per million inhabitants. Similar inefficiencies have been observed across Latin America, where restricted healthcare access exacerbates disparities.

A key factor behind these inefficiencies is that individuals are not properly matched due to physical barriers, bureaucratic issues, and congestion \citep{anaya2024fragmentation, Velasquez2020}, compounded by excess demand. In countries such as Peru, India, and Brazil \citep{state_of_jakarta_traffic_congestion}, congestion is particularly severe. For instance, \cite{WorldBank2024} estimates that traffic congestion alone costs Peru 1.8\% of its GDP annually. Given these conditions, accounting for congestion and excess demand is crucial when modeling these dynamics.

The model presented in this paper provides a framework for congestion costs while also capturing excess of demand across different institutional contexts. As such, it reflects the realities of many developing countries, contrasting with developed nations such as France or Switzerland, where robust transportation infrastructure, efficient bureaucratic systems, and policies ensure universal access to education and healthcare.

The remainder of this paper is structured as follows. Section \ref{sec:preliminaries} defines the fundamental concepts and notation. Section \ref{sec:the-model} introduces the proposed model and examines its theoretical properties. Section \ref{sec:examples} presents illustrative examples that demonstrate the advantages of our approach. Due to data availability constraints, our empirical analysis focuses on the Peruvian health and education sectors. All proofs are provided in the Appendix.

\section{Preliminaries}
\label{sec:preliminaries}

We consider two sets, $X = \{x_1, \ldots, x_N\}$ and $Y = \{y_1, \ldots, y_L\}$. Each element $x_i$ $(y_j)$ represents an individual or a group of individuals/entities that share certain properties and are grouped into the same cluster. For example, in the marriage market (where usually $N = L$), $X$ is the set of men and $Y$ is the set of women. In the case of school matching, $X$ consists of groups of students, grouped, for instance, according to their district, and $Y$ is the set of schools. We denote by $\mu_i$ the \emph{mass} of $x_i$ and by $\nu_j$ the \emph{mass} of $y_j$. For instance, in the marriage market, $\mu_i = \nu_j = 1$, while in the case of schools, $\nu_j$ would represent the capacity of school $j$. Analogously, if $X$ were patients and $Y$ medical care centers, then parameters $\nu_j$ would represent the capacity of the medical care center. When referring to an element of $X$, instead of denoting it by $x_i$, we usually, to simplify the notation, refer to it by $i$. Analogously, the elements of $Y$ are referred to by the index $j$, instead of $y_j$. Moreover, we denote the set of indices $\{1, \ldots, N\}$ by $I$ and the set of indices $\{1, \ldots, L\}$ by $J$. Lastly, we denote by $\pi_{ij}$ the number of individuals of type $i$ matched with  $j$.

The problem addressed in the classic literature, from the perspective of a central planner, is to decide how many individuals from group $i$ should be matched with $j\in J$ and so forth for each $i$, minimizing the matching cost\footnote{Matching individuals incurs a cost that is not limited solely to <<physical>> transportation costs, which certainly accounts for both ways (round trip), but also encompasses implicit costs linked to specific characteristics of $i$ and $j$ such as tuition fee, entrance exam, languages, sex, age, etc. This is why we refer to them as matching costs instead of transportation costs.}, which is given by means of a function $C: \mathbb{R}_{+}^{N, L}\times \mathbb{R}^{P} \to \mathbb{R}$ depending on the matching $\pi=[\pi_{ij}] \in \mathbb{R}_{+}^{N, L}$\footnote{In this work, we will mostly assume that the number of individuals matched can take values in the real positive line and not only in the positive integers.  Note that this is the same issue that arises when one solves the utility maximization problem in the classical framework assuming divisible goods. Later on, we will address again this issue and explain why considering $\pi_{ij}\in \mathbb{R}_{+}$ allows drawing solid conclusions from an economic perspective.}, and a vector of parameters $\theta\in \mathbb{R}^P$. Moreover, the central planner must ensure that there are neither excesses of demand nor supply. Hence, the central planner solves
\begin{equation}\label{eq:optimization-C-general-case}
  \min_{\pi \in \Pi(\mu, \nu)} \ C(\pi; \theta),
\end{equation}
where
\begin{equation}\label{eq:set-Pi-mu-nu}
  \Pi(\mu, \nu) = \left\{\pi_{ij}\geq 0: \sum_{j=1}^{L}\pi_{ij} = \mu_i, \ \forall \ i \in I\ \wedge \
    \sum_{i=1}^{N}\pi_{ij} = \nu_j, \ \forall \ j\in J\right\}.
\end{equation}
A solution to \eqref{eq:optimization-C-general-case} will be from now referred to as an optimal matching or optimal (transport) plan, and will be denoted by $\pi^{*}$. In the standard optimal transport model, separable linear costs are assumed \citep{10.2307/j.ctt1q1xs9h}. This is, $C(\pi, \theta) = \sum_{i, j} c_{ij}\pi_{ij}$. It is therefore assumed that the marginal cost of matching one more individual from $i$ with $j$  is always the same, regardless of how many people are already matched and independent of any other variable.  Hence, the central planner seeks to solve
\begin{equation*}
\mathcal{P}_O: \ \min_{\pi \in \Pi(\mu, \nu)} \ \sum_{i=1}^N \sum_{j=1}^L c_{ij}\pi_{ij}.
\end{equation*}
To solve $\mathcal{P}_O$, one typically employs linear programming techniques, such as the simplex method. As discussed in the classical literature, the most general form of the OT problem allows for the existence of infinite types, and in such a case, the optimization is done over continuous distributions. In this paper, however, we are not going to study continuous distributions. What we do focus on, in line with the entropic regularization problem (see, for example, \cite{carlier2020sista} and \cite{peyre2019computational}), is working with a variation of the optimization problem in the discrete setting. In the case of entropic regularization \eqref{eq:entropic-regularization}, the problem addressed is
\begin{equation}\label{eq:entropic-regularization}
    \min_{\pi \in \Pi(\mu, \nu)} \ \sum_{i=1}^{N}\sum_{j=1}^{L} c_{ij}\pi_{ij} + \sigma \pi_{ij}\ln(\pi_{ij}),
\end{equation}
with $\sigma>0$. Given the strict convexity of $f(x)=x\ln x$, $f(0)=0$ and $\lim_{x \downarrow 0^{+}} f'(x) = -\infty$, the solution is interior, i.e. $\pi_{ij}^{*}>0$. Another variation is the quadratic regularization, where the problem becomes
\begin{equation}\label{eq:quadratic-regularization}
    \min_{\pi \in \Pi(\mu, \nu)} \ \sum_{i=1}^{N}\sum_{j=1}^{L} c_{ij}\pi_{ij} + \frac{\varepsilon}{2}||\pi||_2^2.
\end{equation}
Unlike the problem \eqref{eq:entropic-regularization}, in the case of \eqref{eq:quadratic-regularization}, interior solutions cannot be guaranteed\footnote{This is a common feature with our model, it is not straightforward to determine if the solution is interior.}. In the model we present in the following section, we build upon the problem \eqref{eq:quadratic-regularization}, making a considerable number of modifications that allow us to adapt to specific economic contexts of countries with structural problems. Before concluding this section, let us briefly note that, by a combinatorial argument, it is possible to conclude that the number of matchings is bounded by \( L^M \) in the case where $ \pi_{ij} \in \mathbb{Z}_{+}$. However, for the case $\pi_{ij} \in \mathbb{R}_{+}$, considering $\mu_i, \nu_j> 0$ for all $(i, j)\in I\times J$, the compactness of $ \Pi(\mu, \nu)$ and continuity of the objective functions, ensure the existence of a solution to $\mathcal{P}_O$ and its variants by Weierstrass Theorem.

\section{The model}
\label{sec:the-model}

In this section, we present the model that we propose, inspired by the optimal transport problem with quadratic regularization, but following the approach of \cite{10.1007/s10589-023-00476-1}. The model is derived from the very characteristics of the observed reality in certain locations. This will be explored in more detail in Section \ref{sec:examples}.

First, we need to allow the number of individuals of $X$ who belong to $i$ and are matched with $j=1, ..., L$, to not necessarily be $\mu_i$\footnote{We anticipate that $\mu_i$ will no longer be the mass of individuals of group $i$ but rather a targeted quota for individuals of group $i$.}. Similarly, it may be the case that not all those matched with $j$ sum up to $\nu_j$. This model allows for the possibility of excess supply or demand, which is reasonable in some contexts, as we will see. Indeed, underdeveloped countries may not be able to ensure full coverage in education and health, making it more realistic for them to face a trade-off. However, it is natural for the central planner to seek to minimize these excesses: ensuring that children attend school, that schools or hospitals do not become overcrowded, etc.

Mathematically, we model this by replacing the equality constraints defined by $\Pi(\mu, \nu)$ with penalties in the objective function. Moreover, we introduce weights for each penalty. That is, the constraint $\sum_{i=1}^{N}\pi_{ij} = \nu_j$ is replaced by the penalty term $\delta_j\left[\sum_{i=1}^{N}\pi_{ij} - \nu_j \right]^2$, with $\delta_j>0$, and the constraint $\sum_{j=1}^{L}\pi_{ij}=\mu_i$ is replaced by $\epsilon_i \left[\sum_{j=1}^{L}\pi_{ij}-\mu_i \right]^2$, with $\epsilon_i>0$. The parameters $\epsilon_i, \delta_j$ are weights. We could use any $p\geq 1$ norm for the penalization. However, the quadratic structure yields mathematical simplifications and fulfills the desired role. Then, by allowing deviations, as we will see in the examples, we better approximate the reality of developing countries that cannot fully ensure that demand perfectly matches supply.

Secondly, as is natural in some environments (see the next section), congestion costs are present.
These costs reflect the fact that matching more individuals from \(i\) with the same \(j\) becomes increasingly costly.
For example, from the perspective of physical transportation costs, in countries with high vehicular traffic congestion, the effect of increasing from \(x\) cars to \(x+1\) passing through a certain avenue is less or equal to increasing from \(x+n\) to \(x+n+1\) with \(n \geq 1\). Therefore, clustering groups based on geographic location means that matching many individuals from the same group to a single \( j\) congests the access route (which is the same). Hence, we introduce the term $\sum_{i, j}a_{ij}\pi_{ij}^2$ in the cost structure. The coefficient $a_{ij}$ captures heterogeneity\footnote{In some situations, the coefficient might be large, but in others—such as cases where there are few schools or hospitals, lightly congested streets, good traffic lights, etc.—the coefficient is small. Moreover, one could question whether adding a car still marginally increases costs when a route is already saturated. However, this effect would only arise when the number of travelers is excessively high relative to the route's capacity. For simplicity, we omit this case, as modeling a function that is initially quadratic and later constant would overly complicate the analysis when applying first order conditions.}, while the quadratic term represents the previously described phenomenon\footnote{Instead of using $\pi_{ij}^2$, we could consider a general strictly increasing and convex function $\psi$, such as $\psi(\pi_{ij}) = e^{\pi_{ij}}$ or $\pi_{ij}^3$. However, the quadratic structure facilitates quantitative analysis and preserves the consistency of the results and modeling.}. Note that quadratic costs are not limited to physical transportation costs but can also represent bureaucratic costs. A hospital receives patients of the same type. As more patients of this type arrive, the system must process a greater number of cases. Since they share the same characteristics, it is assumed that the same computer or system will handle their processing. Given the precarious conditions in developing countries, increasing from $x$ to $x+1$ patients may not significantly affect the system, but increasing from $x+n$ to $x+n+1$ with $n>1$ might (e.g., leading to system freezes, delays, etc.).

Finally, we certainly have $\pi_{ij} \geq 0$, for all $(i, j) \in I \times J$.
However, we do not impose upper bounds since we consider a population or universe that is arbitrarily large (a subpopulation of a sufficiently large country)\footnote{This considerably simplify our analysis and does not affect the logic of the model.}. Hence, the optimization is carried out over the entire space $\mathbb{R}_{+}^{NL}$.  This phenomenon also explains the penalties: we no longer assume a fixed number of individuals of type $i$,  and $\mu_i$ represents now a target that the central planner aims to achieve (how many individuals of type $i$ should ideally be matched). Similarly, the parameters $\nu_j$ are also targets of the central planner.

Therefore, following the described scenario, the central planner seeks to minimize costs while taking into account the objective of reaching the targets $\mu_i$ and $\nu_j$. The tradeoff is controlled through a parameter $\alpha \in [0, 1]$. According to what has been specified, the problem is:
\begin{equation}\label{eq:PIC-problem-no-eq-restriction-2}
 \mathcal{P}_{CP}: \  \min_{\pi_{ij}\geq 0} \underbrace{\left\{\underbrace{\alpha\sum_{i=1}^{N}\sum_{j=1}^{L} \varphi(\pi_{ij}; \theta_{ij})}_{\text{Matching direct cost.}} + \underbrace{(1-\alpha)\left[\sum_{i=1}^{N}\epsilon_i \left(\sum_{j=1}^{L}\pi_{ij}-\mu_i \right)^2 + \sum_{j=1}^{L}\delta_j \left(\sum_{i=1}^{N}\pi_{ij} - \nu_j\right)^2\right]}_{\text{Costs of social objectives.}}  \right\}}_{
 F(\pi; \theta, \alpha, \epsilon, \delta, \mu, \nu).}
\end{equation}
where
$\epsilon_1, ..., \epsilon_N$, $\delta_1, ..., \delta_L$  and $\mu_1, \cdots, \mu_N$, $\nu_1, \cdots, \nu_L$ are all non negative, and
\begin{equation}\label{eq:varphi-dij-cij-aij}
  \varphi(\pi_{ij}; \theta_{ij}) = d_{ij}+c_{ij}\pi_{ij} + a_{ij}\pi_{ij}^2.
\end{equation}
In Equation \ref{eq:varphi-dij-cij-aij}, despite its practical relevance, the term $d_{ij}$, representing fixed costs, does not influence the resolution of the problem. For this reason, when considering the parameter vector $\theta_{ij}\in \mathbb{R}^2$, we think of it as $(c_{ij}, a_{ij})$. Unlike more recent models in the quadratic regularization literature, we allow heterogeneity in the quadratic structure.

Having now established the model, which, to the best of our knowledge, is new in the literature\footnote{Quadratic regularization does not involve penalization terms and assumes $a_{ij}=\varepsilon$ for all $(i, j)\in I\times J$. With respect to the classical optimal transport problem, linear costs are considered. On the other hand, entropic regularization involves Inada's conditions, which do not appear in our model. Finally, in \cite{10.1007/s10589-023-00476-1}, only general results concerning penalization are given and this particular problem is not studied at all.}, we focus in this section on the following theoretical problems: (i) ensuring the existence of a solution, (ii) analyzing uniqueness, (iii) addressing why optimization in $\mathbb{R}_{+}^{NL}$ is reasonable and why we do not resort to integer optimization, (iv) studying how to compute interior solutions, and (v) analyzing particular cases both from the analytical and numerical perspective. In the next section, we compare our model with previous ones from the literature and highlight its advantages and the new insights it provides.

\vspace{0.1in}

\noindent \textbf{Existence and uniqueness:} Regarding the existence of a solution to $ \mathcal{P}_{CP}$, in order to apply Weierstrass theorem to overcome the potential issue that the optimization is carried over an unbounded set, we can actually restrict the optimization to $\mathbb{R}_{+}^{NL}\cap \Omega$, where
\begin{equation*}
    \Omega =  [0, R]^{NL}, \ \text{with} \     R = N\max_{1\leq i\leq N}\{\mu_i\}  + L\max_{1\leq j\leq L}\{\nu_j\}.
\end{equation*}
In fact, it is clear from the cost function $F$ that it is strictly lower in the interior of $\Omega$ or in the axes than when evaluated in $\partial \Omega$ (without considering the axes) or outside $\Omega$. This is a consequence of the coercivity of the objective function \citep{rockafellar1970convex}. With respect to uniqueness, it is a consequence of the strict convexity of the objective function. Indeed, the objective function is the sum of a strictly convex function, $\sum_{i, j}\varphi(\pi_{ij}, \theta_{ij})$, with $N+L$ convex functions of the form $\varrho\left(\sum_{m=1}^{M}\eta_{m} - \Theta \right)^2$, with $\varrho, \Theta, \eta_m \in \mathbb{R}_{+}$.

\vspace{0.1in}

\noindent \textbf{Optimization carried over $\mathbb{R}_{+}^{NL}$:} As we mentioned previously, similarly to the case of the classical demand theory, we are assuming that $\pi_{ij}\in \mathbb{R}_{+}$. However, just as it does not make sense to consume $\sqrt{2}$ cars, it can be also unreasonable to consider that $\pi_{ij}$ is not restricted to taking values in $\mathbb{Z}_{+}$, since it ultimately represents the number of individuals. However, given the structure of the optimization problem—a convex quadratic optimization problem—following the classical literature on rounding methods \citep{BeckFiala1981} and, in particular, the discrepancy between integer \citep{ParkBoyd2017, DelPia2021} and continuous solutions in the case of separable quadratic functions with linear constraints \citep{Hochbaum1990}, it is possible to establish bounds on the deviation of the optimal solution when transitioning from the continuous domain \(\mathbb{R}_{+}^{NL}\) to the integer lattice \(\mathbb{Z}_{+}^{NL}\), and ensure that it is sufficiently close. The bound depends on the eigenvalues of the Hessian matrix of the objective function\footnote{Specifically, the deviation is bounded by \(||\pi_{\text{int}} - \pi^*||_{\infty} \leq O(\vartheta(H))\), where \(\vartheta(H) = \lambda_{\max}(H)/\lambda_{\min}(H)\) is the condition number.}. Solving the problem in \(\mathbb{R}_{+}^{NL}\) allows the use of nonlinear convex optimization techniques, yielding not only computational advantages but also analytical insights. In this work, we do not delve deeply into this aspect, but we emphasize that by adjusting the parameters, it is possible to control the bound on the norm of the difference between the solutions in the lattice and the Euclidean space.

\vspace{0.1in}

\noindent \textbf{Interior solutions:} For the sake of simplicity,
we take $\alpha=1/2$.  Karush Kuhn Tucker (KKT) first order conditions applied to \eqref{eq:PIC-problem-no-eq-restriction-2} yield
\begin{equation}\label{eq:F-pikl}
  \frac{\partial F}{\partial \pi_{ij}} = \frac{1}{2}\left(\varphi'(\pi_{ij}^{*}; \theta_{ij}) + 2\epsilon_i \left(\sum_{\ell=1}^{L} \pi_{i\ell}^{*}- \mu_i\right) + 2\delta_{j} \left(\sum_{k=1}^{N}\pi_{kj}^{*} - \nu_{j} \right)  - \gamma_{ij}^{*}\right)=0, \ \forall \ (i, j)\in I\times J.
\end{equation}
Here, $\gamma_{ij}$ is the associated multiplier to the inequality constraint $\pi_{ij}\geq 0$.
Determining whether or not the solution is interior, is not trivial. For corner solutions,
we have to iterate all possible combinations of $\gamma_{ij}^{*}$ equal or not to zero.
Formally, \(2^{NL}\) possibilities.
In general, the problem can numerically be solved.
In what follows, unless the contrary is stated,
we will address the case where the solution is interior.
In this case, from KKT,
we know that $\gamma_{ij}^{*}=0$ for all $(i, j) \in I\times J$. Hence, from \eqref{eq:F-pikl}, we have $\nabla F(\pi^{*})=0$. This set of equations can be written in the compact form $A\begin{bmatrix}
      \pi_{11}^{*} &
      \pi_{12}^{*} &
      \cdots &
      \pi_{NL}^{*}
    \end{bmatrix}^T = b$, where
\begin{equation}\label{eq:A=D+E+F}
  A = \underbrace{\text{Diag}(a_{11}, a_{12}, \dots, a_{NL})}_{D}
  + \underbrace{\text{Diag}(\epsilon_1, \dots, \epsilon_N) \otimes \mathbf{1}_{L\times L}}_{E}
  + \underbrace{\mathbf{1}_{N\times N} \otimes \text{Diag}(\delta_1, \dots, \delta_L)}_{F},
\end{equation}
and $ b = \left[\epsilon_1\mu_1+\delta_1\nu_1-c_{11}/2, \epsilon_1\mu_1+\delta_2\nu_2-c_{12}/2, \cdots, \epsilon_N\mu_N+\delta_L\nu_L-c_{NL}/2\right]^T.$ The following lemma states that $A$ is an invertible matrix.

\begin{lemma}\label{lemma-det-A-positive-IR}
The determinant of $A$ is strictly positive, whenever all parameters are strictly positive.
\end{lemma}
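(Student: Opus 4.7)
The cleanest route is to exhibit $A$ as positive definite and conclude $\det(A) > 0$ from the fact that the determinant is the product of its (positive) eigenvalues. The first thing I would check is that $A$ is symmetric: $D$ is diagonal, while $E$ and $F$ are Kronecker products of symmetric matrices, so $A = D + E + F$ is symmetric and one may meaningfully speak of positive definiteness.

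The heart of the argument is a direct computation of the quadratic form $x^T A x$ for an arbitrary $x \in \mathbb{R}^{NL}$, which I index as $x = (x_{ij})_{i \in I,\, j \in J}$ in the natural way matching the ordering of $\pi$. From the definitions of $D$, $E$, $F$, I expect to obtain the identity
\begin{equation*}
x^T A x \;=\; \sum_{i=1}^{N}\sum_{j=1}^{L} a_{ij}\, x_{ij}^2 \;+\; \sum_{i=1}^{N} \epsilon_i \Bigl(\sum_{j=1}^{L} x_{ij}\Bigr)^{\!2} \;+\; \sum_{j=1}^{L} \delta_j \Bigl(\sum_{i=1}^{N} x_{ij}\Bigr)^{\!2}.
\end{equation*}
The block structure makes each of the three terms transparent: $D$ yields the diagonal sum; the block-diagonal shape of $E$, whose $i$-th block is $\epsilon_i \mathbf{1}_{L \times L}$, produces the row-wise squared sums; and $F$, whose $(i,k)$ block is $\operatorname{Diag}(\delta_1,\dots,\delta_L)$, produces the column-wise squared sums.

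Once that identity is in hand, strict positivity is immediate under the hypothesis. Each of the three summands is non-negative because $a_{ij}, \epsilon_i, \delta_j > 0$, and for any $x \neq 0$ the first summand $\sum_{i,j} a_{ij} x_{ij}^2$ is already strictly positive since all $a_{ij} > 0$. Hence $x^T A x > 0$ for every $x \neq 0$, so $A$ is positive definite and in particular $\det(A) > 0$.

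I do not anticipate a genuine obstacle here; the only step that needs care is the quadratic-form computation for $E$ and $F$, where the Kronecker-product notation has to be unpacked correctly so that the row- and column-sum structure emerges. An alternative, equally short route would be to note that $\mathbf{1}_{L\times L}$ and $\mathbf{1}_{N\times N}$ are rank-one positive semidefinite, so $E$ and $F$ are Kronecker products of positive semidefinite matrices and hence PSD, while $D$ is positive definite; the sum is then PD. I would present the direct quadratic-form version since it makes the role of each parameter block explicit and avoids invoking Kronecker-product spectral facts.
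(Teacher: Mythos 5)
Your proof is correct, and it takes a genuinely different route from the paper's. The paper first establishes that $E$ and $F$ are positive semi-definite by examining the eigenvalues of the Kronecker factors (the eigenvalues of $\mathbf{1}_{L\times L}$ lie in $\{0,L\}$, those of $\operatorname{Diag}(\epsilon_1,\dots,\epsilon_N)$ are the $\epsilon_i$, and the spectrum of a Kronecker product is the set of products of eigenvalues), and then invokes a Minkowski-type superadditivity of the determinant on positive semi-definite matrices, $\det(D+E+F)\geq \det(D+E)+\det(F)\geq \det(D)+\det(E)+\det(F)>0$, citing the relevant determinantal-inequality literature. You instead verify positive definiteness of $A$ directly through the quadratic form
\begin{equation*}
x^{T}Ax=\sum_{i=1}^{N}\sum_{j=1}^{L}a_{ij}x_{ij}^{2}+\sum_{i=1}^{N}\epsilon_i\Bigl(\sum_{j=1}^{L}x_{ij}\Bigr)^{2}+\sum_{j=1}^{L}\delta_j\Bigl(\sum_{i=1}^{N}x_{ij}\Bigr)^{2},
\end{equation*}
which I have checked against the block structures of $D$, $E$, and $F$ and which is exactly right; strict positivity for $x\neq 0$ then follows from $a_{ij}>0$ alone, and $\det(A)>0$ is immediate. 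Your argument is more elementary in that it needs no determinantal inequality and no Kronecker spectral facts, and it has the added virtue of making visible that this quadratic form is precisely the Hessian of the penalized objective $F$ in $\mathcal{P}_2$, so the same computation certifies the strict convexity the paper relies on for uniqueness; in fact it proves the slightly stronger statement that $A$ is positive definite, not merely that $\det(A)>0$. What the paper's route buys in exchange is a weaker hypothesis in principle: the superadditivity argument only needs $D$ to have positive determinant and $E,F$ to be positive semi-definite, a template that would survive if $D$ were replaced by a non-diagonal positive definite block. For this lemma as stated, either proof is complete.
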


Therefore, the linear system $A\pi=b$ has a unique solution.
What we still don't know is whether or not this solution belongs to $\mathbb{R}_{++}^{NL}$.
If so, given the strict convexity of $F$, we would have determined, through an ex-post analysis,
the unique solution to $\mathcal{P}_{CP}$.
However, it may not always be the case that $A^{-1}b \in \mathbb{R}_{++}^{NL}$,
and it is not a trivial matter to determine. Under specific cases, we will be able to do this. We propose both an analytical and a computational method to solve $A\pi = b$. The analytical method allows us, in special cases, to derive important theoretical conclusions, such as closed-form solutions, bounds, and perform comparative statics.
From a computational perspective, we compare our algorithm,
which exploits the structure of the matrix \(A\),
with others for solving linear systems.

\subsection{Neumann's series approach}

\begin{assumption}\label{assumption-espectral-norm}
Let \(a_{ij}> 0\) for all \((i, j)\in I\times J\). Assume that
\begin{align*}
  \max_{1 \leq i \leq N} \{\epsilon_i\} \cdot L +
  \max_{1 \leq j \leq L}\{\delta_j\} \cdot N <
  \min_{(i, j) \in I \times J} \left\{a_{ij}\right\}.
\end{align*}
\end{assumption}

Assumption \ref{assumption-espectral-norm} implies that convex transport costs are large.
Moreover, the fact that \(\epsilon_i, \delta_j\) are small follows from their interpretation as normalized weights, i.e., \(\epsilon_i, \delta_j \in [0, 1]\).

\begin{lemma}\label{th-pi-menor-1}
Under Assumption \ref{assumption-espectral-norm}, the following holds
\begin{equation*}
  A^{-1} = \left(\sum_{k=0}^{\infty}(-1)^k(D^{-1}X)^k\right)D^{-1}.
\end{equation*}
\end{lemma}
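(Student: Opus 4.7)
The plan is to factor $A$ as $A = D(I + D^{-1}X)$, where $X := E + F$, and then apply the Neumann series identity $(I+M)^{-1} = \sum_{k=0}^\infty (-1)^k M^k$ with $M = D^{-1}X$. Convergence of this series is guaranteed whenever $\|M\| < 1$ in some submultiplicative matrix norm, so the whole argument reduces to extracting such a bound from Assumption \ref{assumption-espectral-norm}.

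The factorization step is immediate: since all $a_{ij} > 0$, the diagonal matrix $D$ is invertible and $A = D + X = D(I + D^{-1}X)$. For the norm bound, I would work with the induced $\infty$-norm, because the block structure of $E$ and $F$ makes row sums easy to read off. Indexing rows of the $NL \times NL$ matrix by pairs $(i,j)$, the row of $E$ at $(i,j)$ has exactly $L$ nonzero entries, each equal to $\epsilon_i$, while the row of $F$ at $(i,j)$ has exactly $N$ nonzero entries, each equal to $\delta_j$. Since $E, F \geq 0$ entrywise, the row of $X = E + F$ at $(i,j)$ sums to $L\epsilon_i + N\delta_j$. Scaling row $(i,j)$ by $1/a_{ij}$ then gives
\begin{equation*}
\|D^{-1}X\|_\infty = \max_{(i,j)} \frac{L\epsilon_i + N\delta_j}{a_{ij}} \leq \frac{L \max_i \epsilon_i + N \max_j \delta_j}{\min_{(i,j)} a_{ij}} < 1,
\end{equation*}
with the strict inequality supplied exactly by Assumption \ref{assumption-espectral-norm}.

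From here the conclusion is a one-liner. Because $\|\cdot\|_\infty$ is submultiplicative and $\|D^{-1}X\|_\infty < 1$, the partial sums of $\sum_{k=0}^\infty (-1)^k (D^{-1}X)^k$ form a Cauchy sequence and converge absolutely to $(I + D^{-1}X)^{-1}$. Right-multiplying by $D^{-1}$ and using the factorization yields the claimed formula for $A^{-1}$.

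The main potential obstacle is the choice of norm: the label \emph{espectral-norm} on the assumption suggests that the $2$-norm was intended, and estimating $\|D^{-1}X\|_2$ directly would involve analyzing eigenvalues of $E$ and $F$ through their Kronecker-product structure, which is heavier than needed. The $\infty$-norm bypasses this entirely, and since convergence of the Neumann series only requires spectral radius strictly less than one, and $\rho(D^{-1}X) \leq \|D^{-1}X\|_\infty$, no generality is lost by this choice.
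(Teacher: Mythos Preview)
Your proof is correct and follows the same overall strategy as the paper: factor $A = D(I + D^{-1}X)$ and invoke the Neumann series once $D^{-1}X$ is shown to have norm (or spectral radius) below $1$. The difference lies in how that bound is obtained. The paper works with the spectral norm and argues via eigenvalues, using the Kronecker structure to identify $\lambda_{\max}^{E} = L\max_i\epsilon_i$ and $\lambda_{\max}^{F} = N\max_j\delta_j$, and then bounding $\sigma(D^{-1}X)$ by $\max_{i,j}\{1/a_{ij}\}\cdot(\lambda_{\max}^{E}+\lambda_{\max}^{F})$. You instead read off the row sums of $E$ and $F$ directly and use the induced $\infty$-norm. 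Your route is more elementary and self-contained: it avoids the (not entirely trivial) step of bounding the spectrum of the non-symmetric product $D^{-1}X$ in terms of the spectra of $E$ and $F$, and the row-sum computation is transparent from the block structure. The paper's approach, on the other hand, justifies the label ``spectral'' on the assumption and yields the spectral-norm bound that is reused verbatim in the subsequent convergence-rate estimate (Theorem~\ref{th-convergence-pi-n}); with your $\infty$-norm argument that theorem still goes through, but one would naturally rewrite the error bound in terms of $\|D^{-1}X\|_{\infty}$ rather than $\|D^{-1}X\|_{\sigma}$.
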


\begin{theorem}\label{th-convergence-pi-n}
Under Assumption \ref{assumption-espectral-norm}, $\lim_{n\to \infty}\pi_n =  \pi^{*}=A^{-1}b$, where
\begin{align*}
   \pi_n=S_nD^{-1}b=\left(\sum_{k=0}^n (-1)^k(D^{-1}X)^{k}\right) D^{-1} b.
\end{align*}
\end{theorem}

\subsection{Special cases}

For the aim to explicitly compute $A^{-1}$, we need to impose some additional mild assumptions.

\subsubsection{No interest in overcrowding or no quotas.}

\begin{assumption}\label{assumption-rho-F-0}
Assume that $\delta_j=0$ for all $j\in J$ and $D=\beta I$ for some $\beta>0$.
\end{assumption}

Assumption \ref{assumption-rho-F-0} illustrates the case where the central planner does not care
if in over or underfilling schools or hospitals (\(F=0\)),
and convex costs are the same across the pairs $(i, j)$: $a_{ij}=\beta$.
For instance, the latter applies when distances, routes, or bureaucratic systems are almost the same
for all \((i, j)\in I\times J\).

\begin{assumption}\label{L-assumption-rho}
Assume that \(L\epsilon_i < \min\{1, \beta\}\) for all \(1 \leq i \leq N\).
\end{assumption}

In line with Assumption \ref{assumption-espectral-norm},
Assumption \ref{L-assumption-rho} applies when convex transport costs are large.

\begin{theorem} \label{th-F=0}
Under Assumptions \ref{assumption-rho-F-0} and \ref{L-assumption-rho}, $A^{-1}$ is given as follows
\begin{equation}\label{eq:A-1-epsilon-beta}
A^{-1} = \frac{I}{\beta} + \frac{1}{\beta}
\text{Diag}\left(
-\frac{\epsilon_1}{\beta+L\epsilon_1},
\dots,
-\frac{\epsilon_N}{\beta+L\epsilon_N} \right)
\otimes\mathbf{1}_{L\times L}.
\end{equation}
\end{theorem}

A similar result can be obtained by setting \(E=0\), i.e., when the central planner is only concerned with overcrowding or underutilization of facilities and does not care about population quotas.

\begin{corollary} \label{cor-pij-F-0}
Under  Assumptions \ref{assumption-rho-F-0} and \ref{L-assumption-rho}, the solution of $\mathcal{P}_{CP}$ is given by
\begin{equation}\label{eq:pij-simplified-case}
    \pi_{ij}^{*} = \frac{b_{ij}}{\beta} - \sum_{\ell = 1}^{L} \frac{b_{i\ell}\epsilon_i }{\beta^2+L\epsilon_i\beta},
\end{equation}
provided that the right-hand side of \eqref{eq:pij-simplified-case} is positive.
\end{corollary}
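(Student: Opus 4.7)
The plan is to obtain the corollary by directly evaluating the matrix-vector product $\pi^{*} = A^{-1} b$ using the closed form for $A^{-1}$ established in Theorem \ref{prop-F-0}. Under Assumption \ref{assumption-rho-F-0} the first-order system $\nabla F(\pi^{*}) = 0$ for an interior candidate reduces exactly to $A \pi^{*} = b$, where $A$ is the matrix given in \eqref{eq:A=D+E+F} (indeed, with $\delta_j=0$ and $a_{ij}=\beta$, the stationarity equation \eqref{eq:F-pikl} becomes $\beta \pi_{ij}^{*} + \epsilon_i \sum_{\ell} \pi_{i\ell}^{*} = \epsilon_i \mu_i - c_{ij}/2 = b_{ij}$). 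By Lemma \ref{lemma-det-A-positive-IR}, $A$ is invertible, so the candidate critical point exists and is unique: $\pi^{*} = A^{-1} b$. Because $F$ is strictly convex on $\mathbb{R}_{+}^{NL}$ and the feasible set is convex, once we verify that $A^{-1} b$ has strictly positive components, it is automatically the unique minimizer of $\mathcal{P}_2$ — this is precisely the hypothesis of the corollary.

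Next, I would carry out the componentwise product using the Kronecker structure of $A^{-1}$ from Theorem \ref{prop-F-0}. Indexing entries by pairs $(i, j) \in I \times J$, the term $I/\beta$ contributes $b_{ij}/\beta$ to the $(i,j)$-coordinate, while the second summand
\[
\frac{1}{\beta}\, \mathrm{Diag}\!\left(-\frac{\epsilon_i}{\beta + L\epsilon_i}\right)_{i=1}^{N} \otimes \mathbf{1}_{L\times L}
\]
acts on $b$ by summing the $L$ entries $b_{i\ell}$ with $\ell = 1, \dots, L$ inside the $i$-th block and rescaling this sum by the scalar $-\epsilon_i / (\beta^2 + L\epsilon_i \beta)$. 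Adding the two contributions yields precisely the expression in \eqref{eq:pij-simplified-case}.

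No substantial obstacle is expected: all of the nontrivial work — convergence of the Neumann series and summation of the resulting geometric series — has already been performed in Theorem \ref{prop-F-0}. The only delicate point worth underlining is the positivity proviso: for arbitrary signs of $b_{ij}$ the right-hand side of \eqref{eq:pij-simplified-case} may become negative, in which case $A^{-1} b$ is no longer admissible and the minimizer of $\mathcal{P}_2$ must lie on $\partial \mathbb{R}_{+}^{NL}$ (forcing some multipliers $\gamma_{ij}^{*}$ to be nonzero). The corollary therefore confines itself to the parameter regime where the explicit formula already lands in the open positive orthant, which makes the ex-post verification of interiority immediate.
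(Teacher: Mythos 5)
Your proposal is correct and follows essentially the same route as the paper: the paper's proof is simply that the formula ``follows directly from the computation of $A^{-1}b$ by using \eqref{eq:A-1-epsilon-beta}'', which is exactly the componentwise Kronecker-product evaluation you carry out. Your additional remarks on the reduction of \eqref{eq:F-pikl} to $A\pi^{*}=b$ and on the ex-post interiority check via strict convexity merely make explicit what the paper leaves implicit.
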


\begin{proof}
This result follows directly from the computation of \(A^{-1}b\)
by using \eqref{eq:A-1-epsilon-beta}.
\end{proof}

\subsubsection{Equal weighting and identical convex costs.}

\begin{assumption}\label{as:Y}
Let \(\rho\) and \(\zeta\) be real numbers such that
\(\rho > 2NL\zeta > 0\),
with \(a_{ij} = \rho\)
and \(\epsilon_i = \delta_j = \zeta\) for all $(i, j) \in I\times J$.
\end{assumption}

Assumption \ref{as:Y} implies that  the central planner assigns equal weight to each social objective and where congestion and bureaucratic costs are the same for each pair. Under this assumption, we have
\(D = \rho I\) and \(X = \zeta Y\), where the entries of
\(Y\) are given by
\[
Y_{ij} = \begin{cases}
    2 & i = j, \\
    1 & i \neq j \land (\ceiling{i/N} = \ceiling{j/N} \lor i \equiv j \pmod{N}), \\
    0 & \text{otherwise}.
\end{cases}
\]
This allows us to write
\[
A^{-1} = \frac{1}{\rho}
\left(
\sum_{k=0}^{\infty} \left(-\frac{\zeta}{\rho}\right)^k Y^k\right).
\]
Under Assumption \ref{as:Y},
we will be able to establish bounds on the optimal matching,
i.e., to bound the number of individuals matched across the pairs \((i, j)\).
Lemmas \ref{lem:max-yk}, \ref{lem:min-yk} and \ref{lemma-bound-aij-pij}
are used to establish Theorem \ref{th-control-Aij}.

\begin{lemma}\label{lem:max-yk}
Let \(k \geq 1\) be a positive integer. Then
\[
\max_{1 \leq i, \ j \leq NL}
\left\{\left(Y^k\right)_{ij}\right\}
\leq
\frac{\left(2NL\right)^k}{NL}.
\]
\end{lemma}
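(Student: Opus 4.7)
The plan is to prove the bound by induction on $k$, using only the entrywise inequality $Y_{ij} \leq 2$ together with the fact that $Y$ has size $NL \times NL$. The structural description of $Y$ (diagonal entries equal to $2$, off-diagonal entries in $\{0,1\}$) is needed only to extract $\max_{i,j} Y_{ij} = 2$; no subtler use of its Kronecker block structure is required.

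For the base case $k=1$, the inequality reduces to $\max_{i,j} Y_{ij} \leq (2NL)/(NL) = 2$, which holds by inspection of the definition. For the inductive step, assume the bound holds at level $k$ and write
\[
(Y^{k+1})_{ij} = \sum_{\ell=1}^{NL} (Y^k)_{i\ell}\, Y_{\ell j} \leq NL \cdot \frac{(2NL)^k}{NL} \cdot 2 = \frac{(2NL)^{k+1}}{NL},
\]
bounding each of the $NL$ summands by the product of the two maxima. Taking the maximum over $(i,j)$ closes the induction.

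A non-inductive variant amounts to expanding in one shot: $(Y^k)_{ij} = \sum_{i_1,\ldots,i_{k-1}=1}^{NL} Y_{i,i_1} Y_{i_1,i_2} \cdots Y_{i_{k-1},j}$ is a sum of $(NL)^{k-1}$ products of $k$ factors, each bounded by $2$, which yields $2^k (NL)^{k-1} = (2NL)^k/(NL)$ directly.

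The main obstacle is essentially nil: the claimed bound is precisely what the most naive ``max-entry times row length, iterated $k$ times'' estimate yields, so no clever exploitation of the sparsity of $Y$ (which in fact has only $N+L-1$ nonzero entries per row) is required. The bound is certainly loose, but evidently suffices for its intended downstream use in controlling the Neumann series expansion of $A^{-1}$ from the preceding subsection.
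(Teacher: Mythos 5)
Your induction is exactly the argument in the paper: base case $k=1$ from $\max_{i,j} Y_{ij} = 2$, then bounding each of the $NL$ summands in $(Y^{k+1})_{ij} = \sum_{\ell} (Y^k)_{i\ell} Y_{\ell j}$ by $\frac{(2NL)^k}{NL}\cdot 2$. The proof is correct and matches the paper's; the one-shot path-expansion variant you mention is a harmless restatement of the same estimate.
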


\begin{lemma}\label{lem:min-yk}
Let \(k \geq 2\) be a positive integer.Then
\[
\frac{(NL)^{\floor{k/2}}}{NL} \leq
\min_{1 \leq i, \ j \leq NL}
\left\{\left(Y^k\right)_{ij}\right\}.
\]
\end{lemma}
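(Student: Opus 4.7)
The approach is to induct in strides of two, i.e., from $k$ to $k+2$, with base cases $k = 2$ and $k = 3$ (for both, the target reduces to $NL/NL = 1$). A naive single-step induction would bound $(Y^{k+1})_{ij} = \sum_\ell (Y^k)_{i\ell} Y_{\ell j} \geq \min(Y^k)\sum_\ell Y_{\ell j} = \min(Y^k)(N+L)$, but $N+L$ is generally smaller than the factor $NL$ demanded between consecutive increments of $\lfloor k/2 \rfloor$ in the exponent, so a one-step recursion cannot close. The stride-two approach sidesteps this by replacing $Y$ with $Y^2$, whose minimum entry I will show is bounded below by a positive constant.

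For the base case $k = 2$, I would exhibit a length-two walk explicitly: for any two indices $(i_1, j_1)$ and $(i_2, j_2)$ (in the Kronecker flattening used in the definition of $Y$), the intermediate index $(i_1, j_2)$ is a coordinate-sharing bridge whose two edges $(i_1,j_1)\to(i_1,j_2)$ and $(i_1,j_2)\to(i_2,j_2)$ each share one coordinate, so each contributes weight $\geq 1$ and the corresponding term in $(Y^2)_{(i_1,j_1),(i_2,j_2)}$ is $\geq 1$. A short check handles the degenerate cases $i_1 = i_2$ or $j_1 = j_2$, in which the bridge coincides with an endpoint and the self-loop contributes weight $2$. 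For $k = 3$, I would expand $(Y^3)_{ij} = \sum_\ell Y_{i\ell}(Y^2)_{\ell j}$, apply the $k = 2$ bound on each factor $(Y^2)_{\ell j}$, and close using $\sum_\ell Y_{i\ell} = N + L \geq 2 \geq 1$.

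For the inductive step, assuming the claim for some $k \geq 2$, I would write $(Y^{k+2})_{ij} = \sum_\ell (Y^k)_{i\ell}(Y^2)_{\ell j}$, factor out the pointwise bound $(Y^2)_{\ell j} \geq 1$ from the base case, and invoke the row-sum identity $\sum_\ell (Y^k)_{i\ell} = (N+L)^k$, which follows because $Y\mathbf{1} = (N+L)\mathbf{1}$ makes $\mathbf{1}$ a right eigenvector of $Y$ with eigenvalue $N+L$. This yields $(Y^{k+2})_{ij} \geq (N+L)^k$, and then AM-GM ($(N+L)^2 \geq 4NL \geq NL$) gives $(N+L)^k \geq (NL)^{\lfloor k/2 \rfloor} = (NL)^{\lfloor (k+2)/2 \rfloor}/NL$ after a brief split on the parity of $k$ (the odd case uses the additional slack factor $N+L \geq 1$). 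I expect the main obstacle to be cleanly motivating the stride-of-two scheme and tracking the $\lfloor k/2 \rfloor$ exponent through the parity split; the supporting identities are routine consequences of the Kronecker structure of $Y$.
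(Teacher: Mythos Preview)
Your approach is correct and closely parallels the paper's: both argue in strides of two from base cases $k=2,3$, and both establish $(Y^2)_{ij}\geq 1$ by exhibiting the bridge vertex $(i_1,j_2)$ that shares one coordinate with each endpoint (this is exactly the paper's index $\ell_0$).

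The one genuine divergence is in the recursion step. The paper bounds $(Y^{k+2})_{ij}=\sum_\ell (Y^k)_{i\ell}(Y^2)_{\ell j}$ by applying the \emph{inductive hypothesis} pointwise to each $(Y^k)_{i\ell}$ and the base case to each $(Y^2)_{\ell j}$, then sums $NL$ copies of $(NL)^{\lfloor k/2\rfloor}/(NL)$ to get $(NL)^{\lfloor k/2\rfloor}$ on the nose. You instead use the exact row-sum identity $\sum_\ell (Y^k)_{i\ell}=(N+L)^k$ (from $Y\mathbf{1}=(N+L)\mathbf{1}$), obtaining the sharper intermediate bound $(Y^{k+2})_{ij}\geq (N+L)^k$, which you then compare to $(NL)^{\lfloor k/2\rfloor}$ via AM-GM. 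Your version never actually invokes the inductive hypothesis you state---the argument is effectively direct, not inductive---so you could drop the phrase ``assuming the claim for some $k\geq 2$'' and simply treat every $k\geq 2$ at once by writing $Y^k=Y^{k-2}Y^2$. The paper's route avoids the AM-GM tail and the parity split at the end; yours avoids carrying the inductive hypothesis and gives a tighter bound along the way.
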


\begin{lemma}\label{lemma-bound-aij-pij}
Under Assumptions \ref{assumption-espectral-norm} and \ref{as:Y},
the lower and the upper bounds  of
\(\left(A^{-1}\right)_{ij}\) can be expressed in terms of
\(N, L, \zeta\) and \(\rho\),
\begin{equation}\label{eq-C1-C2}
C_1(N, L, \zeta, \rho) \leq (A^{-1})_{ij} \leq C_2(N, L, \zeta, \rho),
\end{equation}
where
\begin{align*}
C_1 & =
\frac{\zeta  \left(4 \zeta  N^3 L^3 \left(2 \zeta ^3-2 \zeta  \rho ^2-\rho ^3\right)+8 N^2 L^2 \rho ^2 \left(\rho ^2-\zeta ^2\right)+\zeta NL\rho ^2 (2 \zeta +\rho )-2 \rho ^4\right)}{\rho ^4
\left(\zeta ^2 NL-\rho ^2\right)
\left(2NL-1\right)
\left(2NL+1\right)
}\\
C_2 & =
\frac{\zeta ^2 NL \rho  (4 N L - 1)}
{\left(\rho^2-\zeta ^2 NL\right)
\left(\rho-2NL\zeta\right)
\left(\rho+2NL\zeta\right)}.
\end{align*}
\end{lemma}


\begin{theorem} \label{th-control-Aij}
Under Assumptions \ref{assumption-espectral-norm} and \ref{as:Y}, it follows that \(\pi_{ij}^{*} \leq NL\tilde{C}\), for all \(\ (i, j) \in I\times J\), where
$$
\tilde{C} =
\max\{|C_1|, C_2\} \cdot
\max_{\substack{1\leq i\leq N\\ 1 \leq j \leq L}}
\left\{
\left|(\epsilon_i\mu_i+\delta_j\nu_j)-\frac{c_{ij}}{2}\right|
\right\}.
$$
\end{theorem}

Theorem \ref{th-control-Aij} is of particular interest as it allows us to determine, without computing the inverse of $A$, the maximum number of individuals that would be matched between two points $i, j$. In practice, this enables, for example, the establishment of capacity constraints on routes or spaces.

\subsection{Algorithm for computing \(\pi^*\)}

We now provide an efficient algorithm to compute \(\pi^{*}\in \mathbb{R}_{++}^{NL}\).
This is established in Theorem \ref{th:algorithm-ONL}.
First, let us rewrite matrix $A$ given in \eqref{eq:A=D+E+F} as follows:
\begin{align}\label{eq:A-diag-re-written}
A & = \text{Diag}(a_{11},\dots,a_{NL})
+ \sum_{i=1}^N \left(\epsilon_i^{1/2} \mathbf{e}_i \otimes \mathbf{1}_{L \times 1}\right)
\left(\epsilon_i^{1/2} \mathbf{e}_i^T \otimes \mathbf{1}_{1 \times L}\right) + \sum_{j=1}^L \left(\delta_j^{1/2} \mathbf{e}_j \otimes \mathbf{1}_{N \times 1}\right)
\left(\delta_j^{1/2} \mathbf{e}_j^T \otimes \mathbf{1}_{1 \times N}\right).\nonumber
\end{align}

\begin{theorem}\label{th:algorithm-ONL}
For interior solutions $\pi^{*}$,
Algorithm 1 computes $\pi^{*}$ in \(O((N+L)N^2L^2)\) time.
\end{theorem}

\begin{algorithm}[H]\label{al:smw}
\caption{\textsc{Optimize}
\((a, b,
\epsilon_1, \dots, \epsilon_N,
\delta_1, \dots, \delta_L
)\)
}
\begin{algorithmic}[1]
\State \textbf{Input:}
Matrices \(a \in \mathbb{R}_{++}^{NL}\),
\(b \in \mathbb{R}^{NL}\) and
parameters \(\epsilon_1, \dots, \epsilon_N, \delta_1, \dots, \delta_L
\in \mathbb{R}_{++}\)
\State \textbf{Output:}
\(\pi^* \in \mathbb{R}^{NL}\)
\State Initialize
\(A^{-1} \gets \text{Diag}(1/a_{11},\dots,1/a_{NL})
\in \mathbb{R}^{NL, NL}\)
\For{\(i \gets 1, \dots, N\)}
\State Define \(u^{(i)} \in \mathbb{R}^{NL}\)
by \(u^{(i)} \coloneqq
\epsilon_i^{1/2} \mathbf{e}_i \otimes \mathbf{1}_{L \times 1}\)
\State \(A^{-1} \gets
A^{-1} -
\dfrac{A^{-1} u^{(i)} u^{(i)T}A^{-1}}
{1 + u^{(i)T} A^{-1} u^{(i)}}
\)
via Sherman-Morrison formula
\EndFor
\For{\(j \gets 1, \dots, L\)}
\State Define \(v^{(j)} \in \mathbb{R}^{NL}\)
by \(v^{(j)} \coloneqq
\delta_j^{1/2} \mathbf{e}_j \otimes \mathbf{1}_{N \times 1}\)
\State \(A^{-1} \gets
A^{-1} -
\dfrac{A^{-1} v^{(j)} v^{(j)T}A^{-1}}
{1 + v^{(j)T} A^{-1} v^{(j)}}
\)
via Sherman-Morrison formula
\EndFor
\State \textbf{return} \(A^{-1} b\)
\end{algorithmic}
\end{algorithm}

\begin{table}[H]
    \begin{center}
        \begin{tabular}{|c|c|c|c|}
            \hline
            Time & Sparse \(A\) & Galactic & Authors \\
            \hline
            \(O(N^3L^3)\)& No & No & Gaussian Elimination \\
            \(O((NL)^{2.81})\) & No & No & \cite{Strassen1969} \\
            \(O((NL)^{2.331645})\) & Yes & Yes & \cite{PV24} \\
            \(O((NL)^{2.371339})\) & No & Yes & \cite{Alman2025} \\
            \(O((N+L)N^2L^2)\) & No & No & \textbf{This paper} \\
            \hline
        \end{tabular}
        \caption{Algorithms for solving our linear system.
        Assume \(A\) is sparse if it has \(\widetilde{O}(NL)\) nonzero entries.
        ``Galactic'' refers to an algorithm wonderful in its asymptotic behavior,
        but is never used to actual compute anything (\cite{Lipton2010}).}
        \label{table:algorithm-comparison}
    \end{center}
\end{table}

\vspace{-0.2in}

It was observed by \cite{VV2015} that
inversion can be reduced to multiplication with an equivalent runtime
for \cite{Strassen1969} and \cite{Alman2025}.
Even though \cite{PV24} and \cite{Alman2025} provide the best bounds,
they are impractical due to large constants,
leaving us with the remaining three algorithms for practical purposes.
Among these, when \(L = \Theta(N)\),
our algorithm has the tightest upper bound compared to classical Gaussian elimination and an inversion derived from Strassen multiplication.

\subsection{Comparative statics}

Although we know how to compute \(\pi^{*}\) through Neumann’s series or Algorithm 1, obtaining a closed-form expression for \(\pi_{ij}^{*}\) using these techniques is not straightforward. Therefore, to facilitate comparative statics, one possible approach is to approximate the matrix \(A^{-1}\) using Neumann’s series. First, assume that \(A^{-1} \simeq D^{-1}\). This simplification allows us to derive a closed-form expression for \(\pi_{ij}^{*}\), providing initial insights. Under the assumption \(A^{-1} \simeq D^{-1}\), we obtain:
\[
\pi_{ij}^{*} \simeq \frac{2(\epsilon_i \mu_i + \delta_j \nu_j)-c_{ij}}{2a_{ij}}.
\]
From this expression, it follows that \(\partial \pi_{ij}^{*}/\partial a_{ij}, \partial \pi_{ij}^{*}/\partial c_{ij} < 0\) and
\(\partial \pi_{ij}^{*}/\partial \epsilon_i,  \partial \pi_{ij}^{*}/\partial \delta_j\), \(\partial \pi_{ij}^{*}/\partial \mu_i, \partial \pi_{ij}^{*}/\partial \nu_j  > 0\).
These results align with standard economic intuition. However, under this rough approximation, we obtain \(\partial \pi_{ij}^{*}/\partial \theta_{k\ell} = 0\) for \((k, \ell) \neq (i, j)\), which is unrealistic since we expect a substitution effect. To improve upon this, consider a refined approximation:
\[
A^{-1} \sim D^{-1} - D^{-1} X D^{-1} = D^{-1} - (D^{-1})^2 X.
\]
From smooth comparative statics, if \(\pi^{*} \in \mathbb{R}_{++}^{NL}\) is an interior solution to \(\mathcal{P}_{CP}\) associated with the parameter vector \((\overline{\theta}, \epsilon, \delta, \mu, \nu) \in \mathbb{R}_{++}^{2NL} \times \mathbb{R}_{++}^{N} \times \mathbb{R}_{++}^{L} \times \mathbb{R}_{++}^{N} \times \mathbb{R}_{++}^{L}\), then:

\begin{equation}\label{eq:comparative-statics-UR-pij-thetakl}
  \left[\frac{\partial \pi_{ij}^{*}}{\partial \theta_{k\ell}} \right] = -A_{(\overline{\theta}, \epsilon, \delta, \mu, \nu)}^{-1} [I_{NL\times NL} \, | \, 2 \text{Diag}(\pi_{11}^{*}, \cdots, \pi_{NL}^{*})].
\end{equation}
Thus, under the approximation \( A^{-1} \sim D^{-1}-(D^{-1})^2X \), we obtain:
\begin{equation}\label{approx-1-D-X-smooth-CS}
  \left[\frac{\partial \pi_{ij}^{*}}{\partial \theta_{k\ell}}\right]= \left[\frac{\partial \pi_{ij}^{*}}{\partial c_{k\ell}} \, \bigg| \, \frac{\partial \pi_{ij}^{*}}{\partial a_{k\ell}}\right] \simeq -\left[D^{-1}-(D^{-1})^2X \, | \, A_{\Pi, 2}^{-1} \right],
\end{equation}
where \(A_{\Pi, 2}^{-1}\) consists of multiplying column \(ij\) of \(D^{-1} - (D^{-1})^2X\) by \(\pi_{ij}^{*}\). From \eqref{approx-1-D-X-smooth-CS}, if \(\max_{i, j}\{\epsilon_i+ \delta_j\} < 1\), then:
\(\partial \pi_{ij}^{*}/\partial \theta_{ij} < 0\) for all \((i, j) \in I \times J\),
 \(\partial \pi_{ij}^{*}/ \partial \theta_{k\ell} > 0\) for \(i \neq k\) and \(j = \ell\) or \(i = k\) and \(j \neq \ell\),
 \(\partial \pi_{ij}^{*}/ \partial \theta_{k\ell} = 0\) if \(i \neq k\) and \(j \neq \ell\).  Then, we conclude from \eqref{approx-1-D-X-smooth-CS} that:
\[
\partial \pi_{ij}^{*} / \partial c_{ij} = - (1 - (\epsilon_i+\delta_j)) / a_{ij}^2 < 0,
\]
\[
\partial \pi_{ij}^{*} / \partial c_{i\ell} = \epsilon_i / a_{ij}^2 > 0, \quad \partial \pi_{ij}^{*} / \partial c_{kj} = \delta_j / a_{ij}^2 > 0, \quad \partial \pi_{ij}^{*} / \partial c_{k\ell} = 0 \text{ if } i\neq k, j\neq \ell.
\]
\[
\partial \pi_{ij}^{*} / \partial a_{ij} = -2\pi_{ij}^{*}(1-(\epsilon_i+\delta_j)) / a_{ij}^2 < 0, \quad \partial \pi_{ij}^{*} / \partial a_{i\ell} = 2\pi_{i\ell}^{*} \epsilon_i / a_{ij}^2 > 0,
\]
\[
\partial \pi_{ij}^{*} / \partial a_{kj} = 2\pi_{kj}^{*} \delta_j / a_{ij}^2 > 0, \quad \partial \pi_{ij}^{*} / \partial a_{k\ell} = 0 \text{ if } i\neq k, j\neq \ell.
\]
These results are much closer to what we would expect. Indeed, we now observe a \emph{substitution effect}: if the cost of matching individuals of type \(i\) with \(j\) increases ceteris-paribus, then the number of individuals of type \(i\) matched with \(\ell\) (where \(\ell \neq j\)) increases. However, it is important to note that these results are obtained under a truncated Neumann series approximation, and should be interpreted accordingly—as an approximation. Nevertheless, note that under Assumptions \ref{assumption-espectral-norm}, \ref{assumption-rho-F-0}, and \ref{L-assumption-rho}, it is possible to compute the effects of the parameters directly using \eqref{eq:pij-simplified-case}. In such case, similar conclusions can be derived.

\subsection{Case $N=L$}

The case \( N = L >1\) is particularly important in the classical literature on the marriage market \citep{10.1017/CCOL052139015X}.
Similarly, as we will see in Section \ref{sec:examples},
it is of particular interest when analyzing the healthcare sector in Peru.
If the solution in our model is interior,
the problem reduces to solving a system of linear equations,
and the condition \(N=L\) improves the upper bound on the number of operations required by
Algorithm 1 compared to folklore linear system solvers.
On the other hand, classical transportation problems and their variants
require approximation algorithms for solving
convex optimization problems in finite dimensions (\cite{arXiv:2003.00855}).

\section{Examples and applications}
\label{sec:examples}

\subsection{Health care}
The Peruvian healthcare system is characterized by being a fragmented system with three main types of medical care centers: SIS (Seguro Integral de Salud), EsSalud, and EPS (Entidades Prestadoras de Salud) \citep{anaya2024fragmentation}. EPS corresponds to private health insurance offered by companies such as Rimac, Mapfre, Pacífico, La Positiva, among others. These insurances are aimed at formal workers seeking additional coverage beyond mandatory insurance. On the other hand, EsSalud is the public health insurance financed by contributions from formal workers and employers, both from the private and public sectors. Finally, SIS is a universal public insurance targeting people in poverty, informals, or without the ability to pay EPS. For the year of the pandemic (2020), SIS and EsSalud together covered more than 80\% of the population,
while less than \(10\%\) was covered by EPS, see Table \ref{table:afiliados}.

\begin{table}[H]
    \begin{center}
        \begin{tabular}{|c|c|}
            \hline
            \textbf{Insurance} & \textbf{Covered people} \\
            \hline
            EPS       & 8\% \\
            EsSalud   & 30\% \\
            SIS       & 53\% \\
            \hline
        \end{tabular}
        \caption{Percentage of enrollees in Peru's healthcare system by type of medical care center
        in 2020, before COVID-19. At that time, Peru's population was 32,838,579 \citep{DataCommons2025}.
        }
        \label{table:afiliados}
    \end{center}
\end{table}

Under normal circumstances, an individual insured by SIS cannot be simultaneously enrolled in EsSalud or an EPS, and vice versa. The only permitted association is between EsSalud and EPS, where private insurance acts as a complementary coverage to the public system \citep{anaya2024fragmentation, Velasquez2020}. Ideally, an optimal allocation would ensure that informal workers are covered by SIS, while formal workers are appropriately distributed between EsSalud and EPS. However, in practice, overlapping affiliations occur, and individuals often seek medical care outside their designated system. Furthermore, a similar issue arises when categorizing healthcare utilization by type of illness: specialized medical centers create unintended overlaps in patient distribution across insurance networks. Additional issues related to congestion and deficiencies are detailed in Table \ref{table:patient_allocation}.

\begingroup
\renewcommand{\arraystretch}{1.5}
\begin{table}[H]
    \begin{center}
        \begin{tabular}{ |p{5cm}|p{5cm}|p{5cm}|  }
            \hline
            \textbf{Identified Problem} & \textbf{Quantifiable Indicator} & \textbf{Source} \\
            \hline
            Shortage of medical personnel in primary healthcare. &
            12 doctors per 10,000 inhabitants,
            far from the WHO-recommended standard of 43. &
            \cite{bendezu2020}. \\
            Lack of hospital beds in Peru’s healthcare system. &
            1.6 beds per 1,000 inhabitants, below the regional average. &
            \cite{WorldBank2020}.\\
            Congestion in neonatal intensive care units in public hospitals &
            50\% of units experience inefficiency due to patient overcrowding. &
            \cite{arrieta2017congestion}. \\
            Inefficiencies in patient referral system. &
            High percentage of patients treated in facilities not equipped for their conditions.\tablefootnote{In 2016, the MINSA (Ministry of Health) reported a shortage of over 47,000 healthcare professionals. Additionally, 36\% of medium and high-complexity facilities lacked sufficient personnel, 44\% did not have adequate equipment, and 25\% had infrastructure deficiencies.} &
            \cite{Soto2019}. \\
            Coverage noncompliance, high waiting times, and
            some values of medical performance per hour out of range. &
            Coverage of up to 86\% for certain complex treatments. &
            \cite{EsSalud2025b}.\\
            Deferrals in certain cities are very high. &
            More than 23\% of appointments were postponed (Jan-Mar 2025). &
            \cite{EsSalud2025}.\\
            \hline
        \end{tabular}
    \end{center}
    \caption{Issues in patient allocation within Peru's healthcare system.}
    \label{table:patient_allocation}
\end{table}
\endgroup

Given Table \ref{table:patient_allocation}, it is evident that Peru's healthcare system faces significant issues, including service inefficiencies, congestion costs, and saturation. Our model effectively captures these elements, unlike traditional matching models. Our approach can help identify critical areas for improvement, optimizing healthcare demand coverage and reducing congestion costs by analyzing the effect of parameters over $\pi^{*}$. It allows for the prioritization of interventions to address the most severe inefficiencies. To achieve this, estimating parameters is essential. This aligns with empirical research such as \cite{EcheniqueDoval2024} and the methodologies outlined in \cite{echenique2023empirical}, which provide a structured framework to evaluate these inefficiencies.

In Example \ref{ex:PCP-aij},
we simulate three groups of patients in three healthcare networks (SIS, EsSalud, EPS).
Group 3 consists of individuals who can afford an EPS for high-complexity care.
High-complexity care refers to a set of less frequent and more complex health interventions,
such as advanced surgical procedures and oncological treatments.
Group 2 consists of formal workers who can only use EsSalud for high-complexity care.
Note that they are not excluded from affording an EPS,
but if they have one, it will be used exclusively for low-complexity care.
Group 1 consists of the remaining individuals, including informal workers.

A particular edge case in Group 1 includes
wealthy individuals engaged in illegal activities (e.g., drug traffickers or
businessman avoiding taxes).
These individuals are informal workers but may still afford an EPS.
The central planner reasonably operates under the assumption that such cases do not exist. Moreover, it operates assuming no overlaps.\footnote{It is important to emphasize that our model is designed to be executed at a specific point in time. Thus, the planner does not seek overlaps, and therefore, they are not enabled in the model.}

Groups 1 and 3 exhibit significant differences in characteristics, such as socioeconomic status, which increases the cost of mismatching between them. The cost is even higher when there are bureaucratic or legal frictions, as seen in the case of groups 1 and 2, where an EsSalud insured individual cannot be covered simultaneously by SIS, and vice versa \citep{anaya2024fragmentation}. Our model accounts for this heterogeneity in costs, recognizing that legal constraints impose significantly higher penalties than other sources of mismatching. For instance, while receiving treatment for a simple illness at a high-complexity facility incurs some inefficiency, the cost associated with legal barriers preventing access to appropriate healthcare is substantially greater. Moreover, incorporating penalties and weighted constraints allows the model to capture excess demand effectively. Unlike the solutions in traditional models (see Example \ref{ex:PQ-aij}), our model (Example \ref{ex:PCP-aij}) assigns almot zero or one to the match between groups 1 and 2.

Example \ref{ex-full-epsilon} highlights the flexibility of our model by introducing $\varepsilon_1, \dots, \varepsilon_N$ and $\delta_1, \dots, \delta_L$. In the Peruvian context, the government may prioritize patients from EsSalud due to its connection to formal employment, resulting in higher weights assigned to the constraint related to $\mu_2$. On the other hand, the goal is to prevent SIS from becoming overcrowded while maximizing facilities utilization. This objective is achieved, as the example shows that row 2 and column 1 bear the highest load without exceeding $\mu_i$ or $\delta_j$, with respect to the other rows and columns (proportionally to the target mass).

In Example \ref{ex:excess-demand}, we set
$\sum_{i=1}^{N} \mu_i > \sum_{j=1}^{L} \nu_j$, which is crucial for an appropriate representation of excess demand, but additionally. Quadratic costs exacerbate the excess demand.
The observed effect, due to the intentionally chosen parameters, reflects that almost no one from group 2 is matched. The parameters can certainly be adjusted to obtain more realistic values. The example illustrates how our model effectively captures excess demand, a present phenomenon in the Peruvian reality, see Table \ref{table:patient_allocation}.

\subsection{Education}

The education system in Peru is highly complex due to its high degree of decentralization at both the primary and higher education levels. While this decentralization aims to improve educational management, it has generated significant disparities between urban and rural regions \citep{alvarez2010}. Only a few subsystems, such as the High-Performance Schools (COAR), maintain a centralized management model, ensuring homogeneous standards \citep{alcazar2021evaluation}. However, despite not being a centralized system - which would make our model better suited - the level of congestion in Lima and its impact on education justify the introduction of a strictly convex structure. Moreover, since not everyone enrolls in school, partly due to geographic and access limitations, the penalties are well-founded, instead of restrictions.

Specifically, in Peru, infrastructure disparities and access constraints have affected educational equity \citep{alcazar2021evaluation}. Geographic barriers, particularly the Andes and the Amazon rainforest, exacerbate these inequalities by severely limiting accessibility. These mobility constraints directly impact school attendance, contributing to persistent enrollment gaps, especially in secondary education \citep{alba2025opportunity}. Tables \ref{tabla_matricula_primaria} and \ref{tabla_matricula_secundaria} illustrate the evolution of enrollment rates in primary and secondary education, showing gradual improvement but persistent urban-rural disparities.
\begin{table}[H]
    \centering
    \small
    \renewcommand{\arraystretch}{1.1}
    \setlength{\tabcolsep}{5pt}
    \begin{tabular}{|c|c|c|c|c|c|}
        \hline
        \textbf{Area} &
        \textbf{2021} &
        \textbf{2022} &
        \textbf{2023} &
        \textbf{2024} &
        \textbf{Variation 2024/2023} \\
        \hline
        National  & 87.1  & 91.3  & 91.3  & 96.0  & 4.7\% \\
        Urban    & 87.1  & 91.2  & 91.7  & 96.7  & 5\% \\
        Rural     & 87.1  & 91.7  & 89.8  & 93.6  & 3.8\% \\
        \hline
    \end{tabular}
    \caption{Net enrollment rate in primary education in Peru (2021-2024) \citep{inei2024}.}
    \label{tabla_matricula_primaria}
\end{table}
\begin{table}[H]
    \centering
    \small
    \renewcommand{\arraystretch}{1.1}
    \setlength{\tabcolsep}{5pt}
    \begin{tabular}{|c|c|c|c|c|c|}
        \hline
        \textbf{Area} & \textbf{2021} & \textbf{2022} & \textbf{2023} & \textbf{2024} & \textbf{Variation 2024/2023} \\
        \hline
        National  & 80.1  & 81.5  & 86.0  & 88.7  & 2.7\% \\
        Urban    & 80.7  & 81.4  & 86.7  & 88.2 & 1.5\% \\
        Rural     & 78.1  & 81.8  & 83.6  & 90.0  & 6.4\% \\
        \hline
    \end{tabular}
    \caption{Net enrollment rate in secondary education in Peru (2021-2024) \citep{inei2024}.}
    \label{tabla_matricula_secundaria}
\end{table}
\vspace{-0.15in}
A comprehensive study on the impact of congestion on enrollment is provided by \cite{alba2025opportunity}\footnote{Alba found that the 17\% reduction in travel time (equivalent to 30 minutes per day) increased the enrollment rate by 6.3\%.}, highlighting its significance, in line with the findings of \cite{agarwal2019revealed}, thus, justifying the relevance of our model. Indeed, congestion is a major issue in Peru’s education system, particularly in urban areas. According to \cite{WorldBank2024}, Lima is one of the most congested cities in Latin America. It suffers from severe traffic bottlenecks that disproportionately affect students from lower-income districts \citep{alba2025opportunity}. When large numbers of students travel from the same location to the same school, the primary roads connecting them become saturated, increasing commuting times.

Thus, the Peruvian education system is characterized by lack of access, excessive demand, and limited supply, combined with sensitivity to physical traffic congestion, in contrast to certain education systems, such as the French one \citep{Eurydice2024, EducationFrance2024}, which is centralized, homogeneous, ensures universal education, and benefits from a much more modern transportation system. Therefore, the model we propose is well-suited to represent this situation (other cities with congestion such as Mumbai, Jakarta or São Paulo \citep{state_of_jakarta_traffic_congestion} could also be studied). Traditional OT models, by imposing the condition $\sum_i\mu_i=\sum_j\nu_j$, do not apply as effectively. Our model is crucial because, in countries or cities with constraints, allowing for supply or demand imbalances—i.e., schools not reaching full capacity or not all students being enrolled—is a more realistic assumption.

Example \ref{ex-education-full} is key to understanding the education case. We consider four student groups (\(N=4\)) and three schools (\(L=3\)). The groups represent: wealthy high-achieving students (\(i=1\)), poor high-achieving students (\(i=2\)), wealthy low-achieving students (\(i=3\)), and poor low-achieving students (\(i=4\)). School \( j=1 \) is top-ranked and expensive, \( j=2 \) has an average ranking and a mid-range price, and \( j=3 \) is lower-ranked but more affordable. Transportation costs reflect the greater commuting difficulties faced by poor students, who usually use public transportation that runs along the most congested main avenues \citep{alba2025opportunity}, while linear costs capture preferences, ensuring that better students prefer better schools while weaker students do not, controlling also by monetary cost. The solutions highlight key differences: \(\mathcal{P}_{CP}\) introduces quadratic penalties, leading to assignments where students with fewer resources, for whom matching is more costly due to their location and the assigned mode of transportation (as transportation in their area is precarious), are not matched. In contrast, those who have better facilities (positive correlation between socioeconomic status and the quality of transportation) are matched more easily. Moreover, high-achieving wealthy students are never matched with low-cost, low-quality institutions, and low-achieving poor students are never matched with the top, expensive school. Hence, our model captures the complications arising from transportation costs and the unfortunate reality that education cannot be guaranteed for everyone. For example, Peru's geography excludes certain populations in the highlands and jungle, making it very costly for the central planner to complete the match. In Example  \ref{ex-education-full}, 70\% of the top wealthy students are matched, but only almost  3 out of 10 of the poorer, less top-performing students are matched. In this case, both the linear and quadratic models capture the fact that preferences result in \( 0 \) individuals from group \( i=1 \) being matched to \( j=3 \). However, once again, they do not provide the flexibility for \( \sum_j \pi_{ij}^{*} \neq \mu_i \), required in some contexts.

\section{Conclusions}
\label{sec:conclusions}

This paper introduces a novel framework for analyzing mismatching, congestion effects, and supply-demand imbalances in developing economies matching markets. Our model extends the classical optimal transport framework by incorporating heterogeneous quadratic regularization and penalty terms for deviations from target allocations. Unlike traditional approaches that impose strict equality constraints, our formulation allows for more realistic depictions of inefficiencies, capturing excess demand, underutilization, and the role of heterogenous congestion costs. We have also analyzed the resulting optimization problem in detail, establishing conditions for the existence and uniqueness of solutions. Furthermore, we propose both analytical and computational methods to effectively compute interior solutions. Our approach provides not only theoretical insights but also practical tools for addressing real-world mismatching and congestion issues.

In summary, our model provides considerable flexibility, allowing for heterogeneity in congestion costs, i.e., some \( a_{ij} \) could be very small. Removing restrictions enables a better approximation of the reality in developing countries, where equilibrium equations \( \Pi(\mu, \nu) \) do not hold uniformly.

Applying our model to Peru’s healthcare sector highlights its ability to explain observed inefficiencies, and provide more flexibility to the central planner when they cannot ensure matching the entire population adequately, which is common in developing or poor countries.
The fragmented nature of the public insurance system exacerbates mismatching, leading to suboptimal patient distribution and increased congestion in specific medical centers. Our framework captures these distortions by introducing quadratic congestion costs and penalizing deviations from optimal allocations. Although we have focused on the Peruvian case due to the aforementioned data availability constraints, the model can be applied to centralized matching situations with heterogeneous congestion costs and excess supply and demand.

Future research could extend this framework to dynamic settings, stochastic environments where parameters evolve over time (e.g., Markov Jump Linear Systems, since at different times of the day, traffic is less sensitive to new cars), and empirical validation using real-world matching data. Determining whether the solution is interior in terms of the parameters is not a trivial matter and remains to be explored. Furthermore, exploring policy implications, such as optimal subsidy structures or decentralized decision-making mechanisms, could provide valuable information to address inefficiencies in public service delivery.

Our model aims to provide central planners with a mathematically flexible tool to approximate allocation problems (without restricting solutions to the integer domain), while allowing for imbalances between supply and demand and incorporating congestion costs. This is particularly relevant in contexts where congestion costs are significant and where, unlike in highly developed countries, ensuring universal access to healthcare and education, as well as preventing the saturation of these services, remains a major challenge.

\newpage
\appendix

\section{Proofs}

\begin{proof}[Proof of Lemma \ref{lemma-det-A-positive-IR}]
First, $\text{det}(D) = \prod_{(i, j)\in I\times J}a_{ij}>0$, $\text{det}(E) = \text{det}(F) =0$.
On the other hand, the eigenvalues of $E$ are non-negative since
the eigenvalues of $\text{Diag}(\epsilon_1, ..., \epsilon_N)$ are $\epsilon_i>0$ and
the eigenvalues of $\mathbf{1}_{L\times L} $ belong to $\{0, L\}$.
Hence, the products of eigenvalues $\epsilon_i\cdot 0$ and $\epsilon_i \cdot L$ are non-negative,
and so, $E$ is positive semi-definite.
Similarly, $F$ is positive semi-definite.
Thus, $A$ is the sum of a diagonal and positive definite matrix and
two other symmetric and semi-positive definite matrices. According to \cite{zhan2005determinantal}\footnote{For Minkowski's determinant inequality and its generalizations, see \cite{marcus1970extension}, \cite{artstein2015asymptotic}.}
\vspace{-5mm} 
\[
\text{det}(A) = \text{det}(D+E+F) \geq \text{det}(D+E) + \text{det}(F)
\geq \text{det}(D) + \text{det}(E) + \text{det}(F) > 0.\qedhere
\]
\end{proof}

\begin{proof}[Proof of Lemma \ref{th-pi-menor-1}]
Let  \(A=D+X\),
where \(X=E+F\). Then,
\[
A^{-1} = (D+X)^{-1} = (I-(-1)D^{-1}X)^{-1}D^{-1}.
\]
Then, for all \( \lambda \in \sigma(D^{-1}X)\),
\(\lambda \leq \max_{i, j}\left\{1/a_{ij}\right\} \cdot (\lambda_{\max}^{E}+ \lambda_{\max}^{F})\), where $\lambda_{\max}^E  = \max_i \{\epsilon_i\}  \cdot L$ and $\lambda_{\max}^F = \max_j \{\delta_j\}  \cdot N.$
Thus, $\norm{D^{-1}X}_{\sigma}<1$
\footnote{$\norm{\cdot}_{\sigma}$ denotes the spectral norm.},
\begin{equation*}
 (I-(-1)D^{-1}X)^{-1}= \sum_{k=0}^{\infty}(-1)^k(D^{-1}X)^k.
\end{equation*}
Then, by multiplying the series on the right hand side by \(D^{-1}\), the claim follows.
\end{proof}

\begin{proof}[Proof of Theorem \ref{th-convergence-pi-n}]
Define
\begin{equation*}
    \mathcal{E}_n = A^{-1}-S_n = \left(\sum_{k=n+1}^{\infty}(-1)^k(D^{-1}X)^k\right)D^{-1}.
\end{equation*}
On one hand
\(
\norm{\pi_n-\pi^{*}}_{\infty}
= \norm{\mathcal{E}_n b}_{\infty}
\leq \norm{\mathcal{E}_n b}_2
\). On the other hand,
\[
\norm{\mathcal{E}_n b}_2
\leq
\sqrt{NL}
\norm{\sum_{k=n+1}^{\infty}(-1)^k(D^{-1}X)^k}_{\sigma}
\norm{D^{-1}b}_{\infty}
\leq
\frac{\sqrt{NL} \norm{D^{-1}X}_{\sigma}^{n+1} \norm{D^{-1}b}_{\infty}}
{1-\norm{D^{-1}X}_{\sigma}}.
\]
Given \(\varepsilon > 0\), let
\[
N_{\varepsilon} =
\max
\left\{
1,
\ceiling{\left|\log_{\norm{D^{-1}X}_{\sigma}}
\left(
\frac{\varepsilon \left(1 - \norm{D^{-1}X}_{\sigma}\right)}
{\sqrt{NL} \norm{D^{-1} b}_{\infty}}
\right)\right|}
\right\}.
\]
For \(n \geq N_{\varepsilon}\), we have \(\norm{\pi_n-\pi^{*}}_{\infty} < \epsilon\).
\end{proof}

\begin{proof}[Proof of Theorem \ref{th-F=0}]
By using classical properties of Kronecker product, we have
\begin{align*}
    A^{-1}
    &=\frac{I}{\beta} +
    \left[\sum_{k=1}^{\infty} (-1)^k \left(\frac{1}{\beta}\right)^k
    (\text{Diag}(\epsilon_1, \dots, \epsilon_N) \otimes \mathbf{1}_{L\times L})^k\right]D^{-1}\\
    &=\frac{I}{\beta} + \frac{1}{\beta L}
    \sum_{k=1}^{\infty} (-1)^k \left(\frac{L}{\beta} \right)^k
    (\text{Diag}(\epsilon_1^k, \dots, \epsilon_N^k) \otimes  \mathbf{1}_{L\times L})\\
        &=\frac{I}{\beta} + \frac{1}{\beta L}
    \text{Diag}\left(\sum_{k=1}^{\infty}(-1)^k \left(\frac{L\epsilon_1}{\beta}\right)^k,
    \dots,
    \sum_{k=1}^{\infty}(-1)^k \left(\frac{L\epsilon_N}{\beta}\right)^k\right) \otimes \mathbf{1}_{L\times L}\\
    &=\frac{I}{\beta} + \frac{1}{\beta }
    \text{Diag}\left(-\frac{\epsilon_1}{\beta+L\epsilon_1},
    \dots, -\frac{\epsilon_N}{\beta+L\epsilon_N} \right)\otimes \mathbf{1}_{L\times L}.\qedhere
\end{align*}
\end{proof}

\begin{proof}[Proof of Lemma \ref{lem:max-yk}]
The claim certainly holds for \(k=1\).
Now, assuming it holds for \(k \geq 1\), it follows by induction that
\[
\max_{1 \leq i, j \leq NL}
\left\{\left(Y^{k+1}\right)_{ij}\right\}
=
\max_{1 \leq i, j \leq NL}
\left\{
\sum_{\ell=1}^{NL}
\left(Y^k\right)_{i \ell}
Y_{\ell j}
\right\}
\leq \sum_{\ell=1}^{NL} \frac{(2NL)^k}{NL} \cdot 2
= \frac{(2NL)^{k+1}}{NL}. \qedhere
\]
\end{proof}

\begin{proof}[Proof Lemma \ref{lem:min-yk}]
We have two distinct possibilities.

\noindent
\textbf{Case} \(k = 2m\) with \(m \geq 1\) \textbf{.}
We now proceed by induction.
We will manually verify that each
\(
\left(Y^2\right)_{ij}
= \displaystyle\sum_{\ell=1}^{NL} Y_{i \ell} \cdot Y_{\ell j}
\)
satisfies the inequality.
On the diagonal we have
\[
\left(Y^2\right)_{ii} =
\sum_{\substack{\ell = 1 \\ \ell \neq i}}^{NL} Y_{i \ell} \cdot Y_{\ell i}
+
Y_{ii}
\cdot
Y_{ii}
\geq 4.
\]
For \(i \neq j\), set
\[
\ell_0 = N \left(\ceiling{\frac{j}{N}} - \floor{\frac{i - 1}{N}} - 1\right) + i.
\]
Then \(\ell_0 \equiv i \pmod{N}\) and so \(Y_{i \ell_0} \geq 1\).
On the other hand,
\[
\ell_0
\in
\left[
N \left(\ceiling{\frac{j}{N}} - 1\right) + 1,
N \ceiling{\frac{j}{N}}
\right]
\]
implies \(\ceiling{\ell_0 / N} = \ceiling{j / N}\).
So, \(Y_{\ell_0j} \geq 1\).
It follows that
\[
\left(Y^2\right)_{ij}
= \sum_{\substack{\ell=1 \\ \ell \neq \ell_0}}^{NL} Y_{i \ell} \cdot Y_{\ell j}
+ Y_{i \ell_0} \cdot Y_{\ell_0 j} \geq 1.
\]
Assuming
\(
\min_{1 \leq i, j \leq NL}
\left\{\left(Y^{2m}\right)_{ij}\right\}
\geq
(NL)^m / NL
\)
holds for \(m \geq 1\), we obtain
\[
\min_{1 \leq i, j \leq NL}
\left\{
\left(Y^{2m + 2}\right)_{ij}
\right\}
=
\min_{1 \leq i, j \leq NL}
\left\{
\sum_{\ell=1}^{NL}
\left(Y^{2m}\right)_{i \ell}
\cdot
\left(Y^2\right)_{\ell j}
\right\}
\geq
\sum_{\ell=1}^{NL} \frac{(NL)^m}{NL}
=
\frac{(NL)^{m+1}}{NL}.
\]

\noindent
\textbf{Case} \(k = 2m + 1\) with \(m \geq 1\) \textbf{.}
We prove this by induction on \(m\) starting with the base case \(Y^3\):
\[
\left(Y^3\right)_{ij}
=
\sum_{\ell=1}^{NL}\left(Y^2\right)_{i \ell} \cdot Y_{\ell j}
=
\sum_{\substack{\ell=1 \\ \ell \neq j}}^{NL}
\left(Y^2\right)_{i \ell} \cdot Y_{\ell j}
+
\left(Y^2\right)_{i j}
\cdot
\, Y_{jj}
\geq 2.
\]
Assume the statement holds for \(m \geq 1\), then
\[
\min_{1 \leq i, j \leq NL}
\left\{
\left(Y^{2m + 3}\right)_{ij}
\right\}
=
\min_{1 \leq i, j \leq NL}
\left\{
\sum_{\ell=1}^{NL}
\left(Y^{2m + 1}\right)_{i \ell}
\cdot
\left(Y^2\right)_{\ell j}
\right\}
\geq
\sum_{\ell=1}^{NL} \frac{(NL)^m}{NL}
=
\frac{(NL)^{m+1}}{NL}.
\]
This completes the proof.
\end{proof}

\begin{proof}[Proof of Lemma \ref{lemma-bound-aij-pij}]
We write \(A^{-1}\) in terms of \(Y\)
\[
A^{-1}
= \frac{1}{\rho}
\left(
I
- \left(\frac{\zeta}{\rho}\right)Y
+ \sum_{m \geq 1} \left(\frac{\zeta}{\rho}\right)^{2m}Y^{2m}
- \sum_{m \geq 1} \left(\frac{\zeta}{\rho}\right)^{2m + 1}Y^{2m + 1}
\right)
\]
and apply Lemmas \ref{lem:max-yk} and \ref{lem:min-yk} to bound the series as follows,
\[
\frac{\zeta^2 N L}{\rho^2 - \zeta^2 NL}
\leq \sum_{m \geq 1} \left(\frac{\zeta}{\rho}\right)^{2m}\left(Y^{2m}\right)_{ij}
\leq \frac{4 \zeta^2 N^2 L^2}{\rho^2 - 4 \zeta^2 N^2 L^2}
\]
\[
\frac{\rho^3}{\rho(\rho^2 - \zeta^2 NL)} \leq
\sum_{m \geq 1} \left(\frac{\zeta}{\rho}\right)^{2m+1}\left(Y^{2m+1}\right)_{ij}
\leq \frac{8 \zeta^3 N^2 L^2}{\rho(\rho^2 - 4 \rho^2 N^2 L^2)}.
\]
Therefore, $(A_{ij})^{-1}$ is bounded from above by
\begin{equation*}
\frac{1}{\rho}
\left(
1
+ \frac{4 \zeta^2 N^2 L^2}{\rho^2 - 4 \zeta^2 N^2 L^2}
- \frac{\rho^3}{\rho(\rho^2 - \zeta^2 NL)}
\right),
\end{equation*}
and from below by
\begin{equation*}
\frac{1}{\rho}
\left(
- 2\left(\frac{\zeta}{\rho}\right)
+ \frac{\zeta^2 N L}{\rho^2 - \zeta^2 NL}
- \frac{8 \zeta^3 N^2 L^2}{\rho(\rho^2 - 4 \rho^2 N^2 L^2)}
\right).
\end{equation*}
From here, \eqref{eq-C1-C2} follows.
\end{proof}

\begin{proof}[Proof of Theorem \ref{th-control-Aij}]
By triangle inequality,
\begin{align*}
\pi^{*}_{ij}& \leq ||\pi^*||_{\infty}\\
&= \max_{\substack{1\leq i\leq N\\ 1 \leq j \leq L}}
\left\{
\left|
\sum_{k=1}^{NL}
\left(A^{-1}\right)_{(i-1)L+j \quad k} \cdot
b_{\ceiling{k/L} \quad k-L\floor{(k-1)/L}}
\right|
\right\}\\
&\leq
\sum_{k=1}^{NL}
\max_{\substack{1\leq i\leq N\\ 1 \leq j \leq L}}
\left|
\left(A^{-1}\right)_{ij}
\right|
\cdot
\max_{\substack{1\leq i\leq N\\ 1 \leq j \leq L}}
\left|
b_{ij}
\right|\\
&=
NL \tilde{C}.\qedhere
\end{align*}
\end{proof}

\begin{proof}[Proof of Theorem \ref{th:algorithm-ONL}]
Consider Algorithm 1.
It is easy to see that each prefix sum of \(A\) is invertible.
Hence, we can iteratively apply the Sherman-Morrison formula with a rank-1 update at each step.
Then, it is clear that Lines 3 and 12 take \(O\left(N^2 L^2\right)\).
First, the number of iterations for the for-loops on Lines 4-7 and 8-11 is \(N+L\).
We then show that each time we enter any for-loop,
the time spent is \(O\left(N^2 L^2\right)\).
Computing \(1+w^T A^{-1} w\) takes \(O\left(N^2 L^2\right)\),
so the only possible optimization is finding the optimal parenthesization for the product
\(A^{-1} w w^T A^{-1}\).
Since there are only five possible ways to parenthesize the expression,
we determine by brute force that computing
\((A^{-1}w)(w^T A^{-1})\) also takes \(O\left(N^2 L^2\right)\).
This implies the desired time complexity of \(O\left((N+L) N^2 L^2\right)\).
\end{proof}

\section{Numerical examples}

We define $\mathcal{P}_Q$ as the following optimization problem:
\begin{equation*}
\mathcal{P}_Q: \  \min_{\pi \in \Pi(\mu, \nu)} \ \sum_{i=1}^N \sum_{j=1}^L \varphi(\pi_{ij}, \theta_{ij}).
\end{equation*}
It is a generalization of the quadratic regularization problem.

\begin{example}\label{ex:PCP-aij}
The parameters used for solving \(\mathcal{P}_{CP}\) with \(d=5I_{3 \times 3}\) and \(\alpha = 0.5\) are
\[
c =
\begin{bmatrix}
1 & 50 & 20 \\
50 & 1 & 20 \\
20 & 10 & 1
\end{bmatrix},
\
a =
\begin{bmatrix}
1 & 5 & 10 \\
5 & 1 & 2 \\
10 & 5 & 1
\end{bmatrix},
\
\epsilon =
\delta =
\begin{bmatrix}0.3 \\ 0.3 \\ 0.3\end{bmatrix},
\
\mu =
\begin{bmatrix}100 \\ 50 \\ 20 \end{bmatrix}
\ \text{and} \
\nu =
\begin{bmatrix}90 \\ 40 \\ 40 \end{bmatrix}.
\]
The optimal solution \(\pi^*\)
obtained using Algorithm 1 in Mathematica 14.1
\footnote{We also ran \texttt{QuadraticOptimization} and verified that the optimal plans coincide.}
is
\[
\pi^{*} =
\begin{bmatrix}
34.7802 & 0.19412 & 1.65935 \\
0.10148 & 15.6978 & 3.41038 \\
0.883807 & 0.905689 & 9.65139
\end{bmatrix}.
\]
\end{example}

\begin{example}\label{ex:PQ-aij}
Using the same parameters as in \(\mathcal{P}_{CP}\) but enforcing the marginal constraints \(\Pi(\mu, \nu)\)
and removing penalization,
the optimal solutions to $\mathcal{P}_Q$ and $\mathcal{P}_O$ are
\[
\pi^{*}_{\mathcal{P}_Q} = \begin{bmatrix}
84.275 & 8.84062 & 6.88442 \\
4.2985 & 30.4206 & 15.2809 \\
1.42655 & 0.73873 & 17.8347
\end{bmatrix},
\ \pi^{*}_{\mathcal{P}_O} =\begin{bmatrix}
90 & 0 & 10 \\
0 & 40 & 10 \\
0 & 0 & 20 \\
\end{bmatrix}.
\]
\end{example}

\begin{example}\label{ex-full-epsilon}
Using the same parameters as in \(\mathcal{P}_{CP}\) but changing weighting to
$\epsilon = \begin{bmatrix}0.4 & 1 & 0.2\end{bmatrix}^T$
and $\delta = \begin{bmatrix}1 & 0.5 & 0.4\end{bmatrix}^T$ leads to
\[
\pi^{*}=  \begin{bmatrix}
50.7142 & 0.360177 & 1.75142 \\
4.56352 & 22.9044 & 7.05884 \\
2.37786 & 0.873057 & 9.57857
\end{bmatrix}.
\]
\end{example}

\begin{example}\label{ex:excess-demand}
Modifying the parameters with respect to Example \ref{ex-full-epsilon} as follows
\begin{equation*}
a = \begin{bmatrix}
1 & 20 & 2 \\
20 & 5 & 2 \\
5 & 2 & 0.5 \\
\end{bmatrix}, \
\mu = \begin{bmatrix}200 \\ 50 \\ 10\end{bmatrix} \
\text{and} \ \nu=\begin{bmatrix}100 \\ 20 \\ 50 \end{bmatrix}
\end{equation*}
yields
\[
  \pi^{*} = \begin{bmatrix}
69.4335 & 1.23953 & 19.2527 \\
1.52132 & 6.95671 & 11.9992 \\
3.14146 & 0.282174 & 7.55862
\end{bmatrix}.
\]
\end{example}

\begin{example}\label{ex-education-full}
Consider the following parameters for \(\mathcal{P}_{CP}\)
with \(d=\textbf{1}_{4 \times 3}\) and \(\alpha = 0.5\):
\[
c =
\begin{bmatrix}
0.1 & 1 & 6 \\
0.2 & 1 & 4 \\
4 & 1 & 0.2 \\
8 & 1 & 0.1
\end{bmatrix},
\
a =
\begin{bmatrix}
0.5 & 0.5 & 0.5 \\
2 & 2 & 1 \\
0.5 & 0.5 & 0.5 \\
2 & 2 & 1
\end{bmatrix},
\
\epsilon =
\begin{bmatrix}
0.2 \\
0.2 \\
0.2 \\
0.2
\end{bmatrix},
\
\delta =
\begin{bmatrix}
0.2 \\
0.2 \\
0.2
\end{bmatrix},
\
\mu =
\begin{bmatrix}
10 \\
10 \\
10 \\
10
\end{bmatrix},
\
\nu =
\begin{bmatrix}
10 \\
20 \\
10
\end{bmatrix}.
\]
The solution to the optimization problems are\footnote{In this example,
$\pi^{*}_{\mathcal{P}_{CP}}$ is not an interior solution.
Therefore, it is not possible to use Algorithm 1 to solve the problem.
Instead, we use \texttt{QuadraticOptimization}.}
\[
\pi^{*}_{\mathcal{P}_{CP}} =
\begin{bmatrix}
3.25505 & 3.89254 & 0 \\
1.20974 & 1.39412 & 0.333926 \\
0 & 3.99723 & 2.88862 \\
0 & 1.33717 & 2.17004
\end{bmatrix},
\
\pi^{*}_{\mathcal{P}_Q} =
\begin{bmatrix}
4.18 & 5.82 & 0 \\
3.25571 & 3.69071 & 3.05357 \\
1.25857 & 6.79857 & 1.94286 \\
1.30571 & 3.69071 & 5.00357
\end{bmatrix}
\]
and
\[
\pi^{*}_{\mathcal{P}_O} =
\begin{bmatrix}
10 & 0 & 0 \\
0 & 10 & 0 \\
0 & 10 & 0 \\
0 & 0 & 10
\end{bmatrix}.
\]
\end{example}

\newpage

\bibliographystyle{apalike}
\bibliography{references} 

\begin{thebibliography}{}

\bibitem[Abdulkadiroğlu and Sönmez, 2003]{10.1257/000282803321947001}
Abdulkadiroğlu, A. and Sönmez, T. (2003).
\newblock {School Choice: A Mechanism Design Approach}.
\newblock {\em The American Economic Review}, 93(3):729--747.

\bibitem[Agarwal and Somaini, 2019]{agarwal2019revealed}
Agarwal, N. and Somaini, P. (2019).
\newblock {Revealed Preference Analysis of School Choice Models}.
\newblock {\em NBER Working Paper}.

\bibitem[{Agarwal, Nikhil and Somaini, Paulo}, 2023]{echenique2023empirical}
{Agarwal, Nikhil and Somaini, Paulo} (2023).
\newblock {Empirical Models of Non-Transferable Utility Matching}.
\newblock In Echenique, F., Immorlica, N., and Vazirani, V.~V., editors, {\em
  {Online and Matching-Based Market Design}}, pages 530--551. Cambridge
  University Press.

\bibitem[Alba-Vivar, 2025]{alba2025opportunity}
Alba-Vivar, F.~M. (2025).
\newblock {Opportunity Bound: Transport and Access to College in a Megacity}.
\newblock
  \url{https://drive.google.com/file/d/1-zQu__07sloiK2z7CAvQJ8cp3olDAU60/view?usp=drive_link}
  (accessed on March 16).

\bibitem[Alc{\'a}zar and Balarin, 2021]{alcazar2021evaluation}
Alc{\'a}zar, L. and Balarin, M. (2021).
\newblock {\em {Evaluaci{\'o}n del dise{\~n}o e implementaci{\'o}n de los
  colegios de alto rendimiento -- COAR}}.
\newblock MINEDU and GRADE, Lima.

\bibitem[Alman et~al., 2025]{Alman2025}
Alman, J., Duan, R., Vassilevska~Williams, V., Xu, Y., Xu, Z., and Zhou, R.
  (2025).
\newblock More asymmetry yields faster matrix multiplication.
\newblock In {\em Proceedings of the 2025 Annual ACM-SIAM Symposium on Discrete
  Algorithms (SODA)}, pages 2005--2039. Society for Industrial and Applied
  Mathematics.

\bibitem[Anaya-Montes and Gravelle, 2024]{anaya2024fragmentation}
Anaya-Montes, M. and Gravelle, H. (2024).
\newblock {Health Insurance System Fragmentation and COVID-19 Mortality:
  Evidence from Peru}.
\newblock {\em PLOS ONE}, 19(8):e0309531.

\bibitem[Arrieta and Guillén, 2017]{arrieta2017congestion}
Arrieta, A. and Guillén, J. (2017).
\newblock {Output congestion leads to compromised care in Peruvian public
  hospital neonatal units}.
\newblock {\em Health Care Management Science}, 20(2):209--221.
\newblock \url{https://pubmed.ncbi.nlm.nih.gov/26452716/} (accessed on March
  15, 2025).

\bibitem[{Artstein-Avidan, Shiri and Giannopoulos, Apostolos and Milman, Vitali
  D.}, 2015]{artstein2015asymptotic}
{Artstein-Avidan, Shiri and Giannopoulos, Apostolos and Milman, Vitali D.}
  (2015).
\newblock {\em {Asymptotic Geometric Analysis, Part I}}, volume 202 of {\em
  Mathematical Surveys and Monographs}.
\newblock American Mathematical Society.

\bibitem[Beck and Fiala, 1981]{BeckFiala1981}
Beck, J. and Fiala, T. (1981).
\newblock {Integer-making theorems}.
\newblock {\em Discrete Applied Mathematics}, 3(1):1--8.

\bibitem[Bendezu-Quispe et~al., 2020]{bendezu2020}
Bendezu-Quispe, G., Mari-Huarache, L.~F., Álvaro Taype-Rondan, Mejia, C.~R.,
  and Inga-Berrospi, F. (2020).
\newblock {Effect of Rural and Marginal Urban Health Service on the Physicians'
  Perception of Primary Health Care in Peru}.
\newblock {\em Revista Peruana de Medicina Experimental y Salud Pública},
  37(4):636--644.

\bibitem[Carlier et~al., 2023]{carlier2020sista}
Carlier, G., Dupuy, A., Galichon, A., and Sun, Y. (2023).
\newblock {SISTA: Learning Optimal Transport Costs under Sparsity Constraints}.
\newblock {\em Communications on Pure and Applied Mathematics},
  76(9):1659--1677.

\bibitem[Chiappori et~al., 2010]{chiappori2010hedonic}
Chiappori, P.-A., McCann, R.~J., and Nesheim, L.~P. (2010).
\newblock {Hedonic Price Equilibria, Stable Matching, and Optimal Transport:
  Equivalence, Topology, and Uniqueness}.
\newblock {\em Economic Theory}, 42(2):317--354.

\bibitem[{Data Commons}, 2025]{DataCommons2025}
{Data Commons} (2025).
\newblock Population statistics for peru.
\newblock
  \url{https://datacommons.org/place/country/PER?utm_medium=explore&mprop=count&popt=Person&hl=es}
  (accessed 18 March 2025).

\bibitem[Doval et~al., 2024]{EcheniqueDoval2024}
Doval, L., Echenique, F., Huang, W., and Xin, Y. (2024).
\newblock {Social Learning in Lung Transplant Decision}.
\newblock Accessed on February 21, 2025. Available at
  \href{https://arxiv.org/abs/2411.10584}{arXiv:2411.10584}.

\bibitem[Dupuy and Galichon, 2014]{10.1086/677191}
Dupuy, A. and Galichon, A. (2014).
\newblock {Personality Traits and the Marriage Market}.
\newblock {\em Journal of Political Economy}, 122(6):1271--1319.

\bibitem[Dupuy and Galichon, 2022]{10.3982/QE928}
Dupuy, A. and Galichon, A. (2022).
\newblock {A Note on the Estimation of Job Amenities and Labor Productivity}.
\newblock {\em Quantitative Economics}, 13:153--177.

\bibitem[Dupuy et~al., 2019]{10.1093/imaiai/iaz015}
Dupuy, A., Galichon, A., and Sun, Y. (2019).
\newblock {Estimating Matching Affinity Matrices under Low-Rank Constraints}.
\newblock {\em Information and Inference: A Journal of the IMA}, 8(4):677--689.

\bibitem[{Echenique, Federico and M. Bumin, Yenmez},
  2015]{10.1257/aer.20130929}
{Echenique, Federico and M. Bumin, Yenmez} (2015).
\newblock {How to Control Controlled School Choice}.
\newblock {\em The American Economic Review}, 105(8):2679--2694.

\bibitem[{Echenique, Federico, Joseph Root and Feddor Sandomirskiy},
  2024]{arXiv:2402.13378}
{Echenique, Federico, Joseph Root and Feddor Sandomirskiy} (2024).
\newblock {Stable Matching as Transportation}.
\newblock Accessed on February 21, 2025. Available at
  \href{https://arxiv.org/abs/2402.13378}{ arXiv:2402.13378}.

\bibitem[{EsSalud}, 2025a]{EsSalud2025b}
{EsSalud} (2025a).
\newblock Dashboard de indicadores fonafe y tablero estratégico.
\newblock
  \url{https://app.powerbi.com/view?r=eyJrIjoiMDQwMDVlOGItNGY5Zi00ZjFjLWEyZDMtYjY1Zjk0MWVjMjcxIiwidCI6IjM0ZjMyNDE5LTFjMDUtNDc1Ni04OTZlLTQ1ZDYzMzcyNjU5YiIsImMiOjR9}
  (accessed 18 March 2025).

\bibitem[{EsSalud}, 2025b]{EsSalud2025}
{EsSalud} (2025b).
\newblock Tablero de diferimento de citas.
\newblock
  \url{https://app.powerbi.com/view?r=eyJrIjoiN2NlMTNmNWEtODA3MS00M2UyLWE3NDAtNjcyYjZjYTQ0MmJmIiwidCI6IjM0ZjMyNDE5LTFjMDUtNDc1Ni04OTZlLTQ1ZDYzMzcyNjU5YiIsImMiOjR9}
  (accessed 18 March 2025).

\bibitem[{Eurydice - European Commission}, 2024]{Eurydice2024}
{Eurydice - European Commission} (2024).
\newblock National education systems: France overview.
\newblock
  \url{https://eurydice.eacea.ec.europa.eu/national-education-systems/france/overview}
  (accessed 18 March 2025).

\bibitem[Gale and Shapley, 1962]{10.1080/00029890.1962.11989827}
Gale, D. and Shapley, L.~S. (1962).
\newblock {College Admissions and the Stability of Marriage}.
\newblock {\em The American Mathematical Monthly}, 69(1):9--15.

\bibitem[Galichon, 2016]{10.2307/j.ctt1q1xs9h}
Galichon, A. (2016).
\newblock {\em {Optimal Transport Methods in Economics}}.
\newblock Princeton University Press.

\bibitem[Galichon, 2021]{hal-03936221}
Galichon, A. (2021).
\newblock {The Unreasonable Effectiveness of Optimal Transport in Economics}.
\newblock Accessed on February 21, 2025. Available at
  \href{https://arxiv.org/abs/2107.04700}{ arXiv:2107.04700}.

\bibitem[González-Sanz and Nutz, 2024]{gonzalez2024sparsity}
González-Sanz, A. and Nutz, M. (2024).
\newblock {Sparsity of Quadratically Regularized Optimal Transport: Scalar
  Case}.
\newblock Accessed on February 21, 2025. Available at
  \href{https://arxiv.org/abs/2410.03353}{ arXiv:2410.03353}.

\bibitem[Hatfield and Milgrom, 2005]{10.1257/0002828054825469}
Hatfield, J.~W. and Milgrom, P.~R. (2005).
\newblock {Matching with Contracts}.
\newblock {\em The American Economic Review}, 95(4):913--935.

\bibitem[Hochbaum and Shanthikumar, 1990]{Hochbaum1990}
Hochbaum, D.~S. and Shanthikumar, J.~G. (1990).
\newblock {Convex Separable Optimization Is Not Much Harder than Linear
  Optimization}.
\newblock {\em Journal of the ACM}, 37(4):843--862.

\bibitem[Hylland and Zeckhauser, 1979]{10.1086/260756}
Hylland, A. and Zeckhauser, R. (1979).
\newblock {The Efficient Allocation of Individuals to Positions}.
\newblock {\em The Journal of Political Economy}, 87(2):293--314.

\bibitem[{INEI}, 2024]{inei2024}
{INEI} (2024).
\newblock {Condiciones de Vida en el Perú - Informe Técnico 2024}.

\bibitem[Izmailov and Solodov, 2023]{10.1007/s10589-023-00476-1}
Izmailov, A.~F. and Solodov, M.~V. (2023).
\newblock {Convergence rate estimates for penalty methods revisited}.
\newblock {\em Computational Optimization and Applications}, 85(3):973--992.

\bibitem[{Johns Hopkins University Coronavirus Resource Center}, 2023]{JHU2023}
{Johns Hopkins University Coronavirus Resource Center} (2023).
\newblock Covid-19 mortality data.
\newblock \url{https://coronavirus.jhu.edu/data/mortality} (accessed 18 March
  2025).

\bibitem[Kelso and Crawford, 1982]{10.2307/1913392}
Kelso, A.~S. and Crawford, V.~P. (1982).
\newblock {Job Matching, Coalition Formation, and Gross Substitutes}.
\newblock {\em Econometrica}, 50(6):1483.

\bibitem[Kikuchi and Hayashi, 2020]{state_of_jakarta_traffic_congestion}
Kikuchi, T. and Hayashi, S. (2020).
\newblock {Traffic congestion in Jakarta and the Japanese experience of
  transit-oriented development}.
\newblock {\em S. Rajaratnam School of International Studies}.

\bibitem[Laveriano, 2010]{alvarez2010}
Laveriano, N.~A. (2010).
\newblock {The Decentralization of Education in Peru}.
\newblock {\em Educación: PUCP}, 19(37):7--26.

\bibitem[Lipton, 2010]{Lipton2010}
Lipton, R.~J. (2010).
\newblock {Galactic Algorithms}.
\newblock G{\"o}del’s Lost Letter and P=NP, Blog post.
  \url{https://rjlipton.com/2010/10/23/galactic-algorithms} (accessed 16 March
  2025).

\bibitem[Lorenz et~al., 2019]{lorenz_manns_meyer_2019}
Lorenz, D.~A., Manns, P., and Meyer, C. (2019).
\newblock {Quadratically Regularized Optimal Transport}.
\newblock {\em Applied Mathematics \& Optimization}.

\bibitem[Marcus and Gordon, 1970]{marcus1970extension}
Marcus, M. and Gordon, W.~R. (1970).
\newblock {An extension of the Minkowski Determinant Theorem}.
\newblock {\em Cambridge University Press}.

\bibitem[Merigot and Thibert, 2020]{arXiv:2003.00855}
Merigot, Q. and Thibert, B. (2020).
\newblock {Optimal transport: discretization and algorithms}.
\newblock Accessed on February 21, 2025. Available at
  \href{https://arxiv.org/abs/2003.00855}{ arXiv:2003.00855}.

\bibitem[{Ministère de l'Éducation Nationale et de la Jeunesse},
  2024]{EducationFrance2024}
{Ministère de l'Éducation Nationale et de la Jeunesse} (2024).
\newblock Les chiffres clés du système éducatif.
\newblock
  \url{https://www.education.gouv.fr/les-chiffres-cles-du-systeme-educatif-6515}
  (accessed 18 March 2025).

\bibitem[Nutz, 2024]{nutz_2024}
Nutz, M. (2024).
\newblock {Quadratically Regularized Optimal Transport: Existence and
  Multiplicity of Potentials}.
\newblock Accessed on February 21, 2025. Available at
  \href{https://arxiv.org/abs/2404.06847}{ arXiv:2404.06847}.

\bibitem[Park and Boyd, 2018]{ParkBoyd2017}
Park, J. and Boyd, S. (2018).
\newblock {A semidefinite programming method for integer convex quadratic
  minimization}.
\newblock {\em Optimization Letters}, 12:449--518.

\bibitem[Peng and Vempala, 2024]{PV24}
Peng, R. and Vempala, S.~S. (2024).
\newblock {Solving Sparse Linear Systems Faster than Matrix Multiplication}.
\newblock {\em Commun. ACM}, 67(7):79–86.

\bibitem[Peyré and Cuturi, 2019]{peyre2019computational}
Peyré, G. and Cuturi, M. (2019).
\newblock {Computational Optimal Transport: With Applications to Data Science}.
\newblock {\em New Foundations and Trends}, 11(5-6):355--607.

\bibitem[Pia and Ma, 2022]{DelPia2021}
Pia, A.~D. and Ma, M. (2022).
\newblock {Proximity in Concave Integer Quadratic Programming}.
\newblock {\em Mathematical Programming}, 194:871--900.

\bibitem[Rockafellar, 1970]{rockafellar1970convex}
Rockafellar, R.~T. (1970).
\newblock {\em {Convex Analysis}}.
\newblock Princeton Mathematical Series. Princeton University Press, Princeton,
  NJ.

\bibitem[Roth and Sotomayor, 1990]{10.1017/CCOL052139015X}
Roth, A.~E. and Sotomayor, M. A.~O. (1990).
\newblock {\em {Two-Sided Matching: A Study in Game-Theoretic Modeling and
  Analysis}}, volume~18 of {\em Econometric Society Monographs}.
\newblock Cambridge University Press.

\bibitem[Soto, 2019]{Soto2019}
Soto, A. (2019).
\newblock {Barreras para una atención eficaz en los hospitales de referencia
  del Ministerio de Salud del Perú: atendiendo pacientes en el siglo XXI con
  recursos del siglo XX}.
\newblock {\em Revista Peruana de Medicina Experimental y Salud Pública},
  36(2):304.

\bibitem[Strassen, 1969]{Strassen1969}
Strassen, V. (1969).
\newblock {Gaussian elimination is not optimal}.
\newblock {\em Numerische Mathematik}, 13(4):354–356.

\bibitem[Vassilevska, 2015]{VV2015}
Vassilevska, V. (2015).
\newblock {CS367 Algebraic Graph Algorithms - Lectures 1 and 2 on Matrix
  Multiplication and Matrix Inversion}.
\newblock Scribed by Jessica Su.
  \url{https://theory.stanford.edu/~virgi/cs367/lecture1.pdf} (accessed 16
  March 2025).

\bibitem[Velásquez, 2020]{Velasquez2020}
Velásquez, A. (2020).
\newblock {\em {Consideraciones éticas del aseguramiento universal de salud en
  el Peru}}.
\newblock Antonio Ruiz de Montoya University.

\bibitem[Villani, 2009]{10.1007/978-3-540-71050-9}
Villani, C. (2009).
\newblock {\em {Optimal Transport: Old and New}}, volume 338 of {\em
  Grundlehren der mathematischen Wissenschaften}.
\newblock Springer.

\bibitem[Wiesel and Xu, 2024]{wiesel2024sparsity}
Wiesel, J. and Xu, X. (2024).
\newblock {Sparsity of Quadratically Regularized Optimal Transport: Bounds on
  Concentration and Bias}.
\newblock Accessed on February 21, 2025. Available at
  \href{https://arxiv.org/abs/2410.03425}{ arXiv:2410.03425 }.

\bibitem[{World Bank}, 2020]{WorldBank2020}
{World Bank} (2020).
\newblock Health at a glance: Latin america and the caribbean 2020.
\newblock
  \url{https://documents1.worldbank.org/curated/en/383471608633276440/pdf/Health-at-a-Glance-Latin-America-and-the-Caribbean-2020.pdf}
  (accessed 18 March 2025).

\bibitem[{World Bank}, 2024]{WorldBank2024}
{World Bank} (2024).
\newblock Modernizing traffic management in lima with world bank support.
\newblock
  \url{https://www.bancomundial.org/es/news/press-release/2024/10/15/modernizing-traffic-management-in-lima-with-world-bank-support}
  (accessed 18 March 2025).

\bibitem[Zhan, 2005]{zhan2005determinantal}
Zhan, S. (2005).
\newblock On the determinantal inequalities.
\newblock {\em Journal of Inequalities in Pure and Applied Mathematics}, 6(4).

\end{thebibliography}
\end{document}